\documentclass[twoside,a4paper,backrefs,msc-links]{amsproc}
\usepackage{graphicx}
\PassOptionsToPackage{pdfauthor={Vladimir V. Kisil},%
    pdftitle={Erlangen Program at Large: Outline},%
    pdfsubject={general mathematics},%
    backref=page,%
    breaklinks=true,
    pdfkeywords={symmetries, Erlangen program}
}{hyperref}
\usepackage{hyperref}
\usepackage{amsrefs}
\IfFileExists{eulervm.sty}{\usepackage{eulervm}
}{}
\numberwithin{equation}{section}
\newtheorem{thm}{Theorem}[section]
\newtheorem{prop}[thm]{Proposition}
\theoremstyle{definition}
\newtheorem{defn}[thm]{Definition}
\newtheorem{example}[thm]{Example}
\theoremstyle{remark}
\newtheorem{rem}[thm]{Remark}


\makeatletter
\providecommand{\diag}{\mathop {\operator@font diag}\nolimits}
\makeatother
\providecommand{\object}[2][\,]{\ensuremath{\mathrm{#2}#1}}
\providecommand{\such}{\,\mid\,}
\providecommand{\norm}[2][\relax]{\left\|#2\right\|\ifx#1\relax\else_{#1}\fi}
\providecommand{\modulus}[2][\relax]{\left| #2 \right|\ifx#1\relax\else_{#1}\fi}
\providecommand{\lvec}[1]{\overrightarrow{#1}}

\providecommand{\cycle}[3][]{{#1 C^{#2}_{#3}}}
\newcommand{\zcycle}[3][]{#1 Z^{#2}_{#3}}
\newcommand{\realline}[3][]{#1 R^{#2}_{#3}}
\providecommand{\matr}[4]{{\ensuremath{ \left(\!\! \begin{array}{cc}
#1 & #2 \\ #3 & #4
\end{array}\!\!\right) }}}
\providecommand{\spec}[1][]{\ensuremath{\mathbf{sp}}\,}
\providecommand{\GiNaC}{\textsf{GiNaC}}
\providecommand{\bs}{\breve{\sigma}}
\providecommand{\SL}[1][2]{\ensuremath{\FSpace{SL}{#1}(\Space{R}{})}}
\providecommand{\scalar}[3][\relax]{\left\langle #2,#3 
        \right\rangle\ifx#1\relax\else_{#1}\fi}
\providecommand{\Space}[3][]{\ensuremath{\mathbb{#2}^{#3}_{#1}{}}}
  \providecommand{\FSpace}[3][]{\ensuremath{\ifx#2l \ell_{#3}^{#1}{}\else
  #2_{#3}^{#1}{}\fi}} 
\providecommand{\rmi}{\mathrm{i}}
\providecommand{\rmc}{\mathrm{\breve\i}}
\providecommand{\tr}{\mathop{tr}}
\providecommand{\algebra}[1]{\ensuremath{\mathfrak{#1}}}
\providecommand{\MR}[1]{\textbf{MR}~\href{http://www.ams.org/mathscinet-getitem?mr=#1}{\#~#1}}
\providecommand{\Zbl}[1]{\textbf{Zbl}~\href{http://www.emis.de:80/cgi-bin/zmen/ZMATH/en/zmathf.html?first=1&maxdocs=3&type=html&an=#1&format=complete}{\#~#1}}
\providecommand{\eprint}[2]{E-print: \href{#1}{\texttt{#2}}}
\providecommand{\modulus}[2][\relax]{\left| #2 \right|\ifx#1\relax\else_{#1}\fi}
\hypersetup{colorlinks=true,bookmarks=true}
\providecommand{\wiki}[2]{\href{http://en.wikipedia.org/wiki/#1}{#2}}
\newcommand{\Ba}{\bar{\alpha}}
\newcommand{\Bb}{\bar{\beta}}

\providecommand{\oper}[1]{\mathcal{#1}}
\newcommand{\ga}{\mathsf{a}}


\begin{document}

\title
{Erlangen Program at Large: Outline}

\author[Vladimir V. Kisil]%
{\href{http://maths.leeds.ac.uk/~kisilv/}{Vladimir V. Kisil}}
\address{
School of Mathematics,
University of Leeds,
Leeds, LS2\,9JT,
UK
}
\thanks{On  leave from the Odessa University.}
\email{
\href{mailto:kisilv@maths.leeds.ac.uk}{kisilv@maths.leeds.ac.uk}
}
\urladdr{
\url{http://www.maths.leeds.ac.uk/~kisilv/}}

\subjclass[2000]{Primary 30G35; Secondary 22E46, 30F45, 32F45, 43A85, 30G30, 42C40, 46H30, 47A13, 81R30, 81R60.}
\keywords{  Special linear group, Hardy space, Clifford algebra, elliptic,
  parabolic, hyperbolic, complex numbers, dual
  numbers, double numbers, split-complex numbers,
  Cauchy-Riemann-Dirac operator, M\"obius transformations, functional
  calculus, spectrum, quantum mechanics, non-commutative geometry.}

\maketitle

\begin{abstract}
  This is an outline of \emph{Erlangen Program at
    Large}. Study of objects and properties, which are invariant under
  a group action, is very fruitful far beyond the traditional
  geometry. In this paper we demonstrate this on the example of the
  group \(\SL\). Starting from the conformal geometry we develop
  analytic functions and apply these to functional calculus. Finally we
  provide an extensive description of open problems.
\end{abstract}
\tableofcontents

\section{Introduction}
\label{sec:introduction}

  The simplest objects with non-commutative multiplication may be
  \(2\times 2\) matrices with real entries.  Such matrices \emph{of
    determinant one} form a closed set under multiplication (since
  \(\det (AB)=\det A\cdot \det B\)), the identity matrix is among them
  and any such matrix has an inverse (since \(\det A\neq 0\)). In
  other words those matrices form a group, \wiki{SL2(R)}{the \(\SL\)
    group}~\cite{Lang85}---one of the two most important Lie groups
  in analysis. The other group is \wiki{Heisenberg_group}{the
    Heisenberg group}~\cite{Howe80a}. By contrast the
  \wiki{Affine_transformation}{``\(ax+b\)''-group}, which is often
  used to build wavelets, is only a subgroup of \(\SL\), see the
  numerator in~\eqref{eq:moebius}.

  The simplest non-linear transforms of the real
  line---linear-fractional or \wiki{Moebius_transformation}{M\"obius
    maps}---may also be associated with \(2\times 2\)
  matrices~\cite{Beardon05a}*{Ch.~13}:
  \begin{equation}
    \label{eq:moebius}
    g: x\mapsto g\cdot x=\frac{ax+b}{cx+d}, \text{ where } 
    g=  \begin{pmatrix}
      a&b\\c&d
    \end{pmatrix}, x\in\Space{R}{}.
  \end{equation}
  An enjoyable calculation shows that the composition of two
  transforms~\eqref{eq:moebius} with different matrices \(g_1\) and
  \(g_2\) is again a M\"obius transform with matrix the product
  \(g_1 g_2\). In other words~\eqref{eq:moebius} it is a (left) action
  of \(\SL\).

  According to F.~Klein's \wiki{Erlangen_program}{\emph{Erlangen
      program}} (which was influenced by S.~Lie) any geometry is
  dealing with invariant properties under a certain group action. For
  example, we may ask: \emph{What kinds of geometry are related to
   the \(\SL\) action~\eqref{eq:moebius}}?
  
  The Erlangen program has probably the highest rate of
  \(\frac{\text{praised}}{\text{actually used}}\) among mathematical
  theories not only due to the big numerator but also due to undeserving
  small denominator. As we shall see below Klein's approach provides
  some surprising conclusions even for such over-studied objects as
  circles.

  \subsection{Make a Guess in Three Attempts}
  \label{sec:make-guess-three}

  It is easy to see that the \(\SL\) action~\eqref{eq:moebius} makes
  sense also as a map of complex numbers \(z=x+\rmi y\),
  \(\rmi^2=-1\). Moreover, if \(y>0\) then \(g\cdot z\) has a positive
  imaginary part as well, i.e. \eqref{eq:moebius} defines a map from
  the upper half-plane to itself. 

  However there is no need to be restricted to the traditional route
  of complex numbers only. Less-known \wiki{Dual_number}{\emph{dual}}
  and \wiki{Split-complex_number}{\emph{double}} numbers
  \cite{Yaglom79}*{Suppl.~C} have also the form \(z=x+\rmi y\) but
  different assumptions on the imaginary unit \(\rmi\): \(\rmi^2=0\)
  or \(\rmi^2=1\) correspondingly. Although the arithmetic of dual and
  double numbers is different from the complex ones, e.g. they have
  divisors of zero, we are still able to define their transforms
  by~\eqref{eq:moebius} in most cases.

  Three possible values \(-1\), \(0\) and \(1\) of \(\sigma:=\rmi^2\)
  will be refereed to here as \emph{elliptic}, \emph{parabolic} and
  \emph{hyperbolic} cases respectively.  We repeatedly meet such a
  division of various mathematical objects into three classes.  They
  are named by the historically first example---the classification of
  conic sections---however the pattern persistently reproduces itself
  in many different areas: equations, quadratic forms, metrics,
  manifolds, operators, etc.  We will abbreviate this separation as
  \emph{EPH-classification}.  The \emph{common origin} of this
  fundamental division can be seen from the simple picture of a
  coordinate line split by zero into negative and positive
  half-axes:
  \begin{equation}
    \label{eq:eph-class}
    \raisebox{-15pt}{\includegraphics[scale=1]{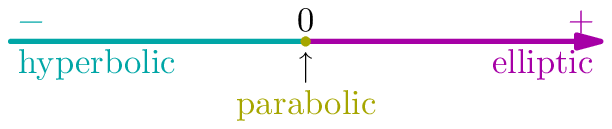}}
  \end{equation}

  Connections between different objects admitting EPH-classification
  are not limited to this common source. There are many deep results
  linking, for example,
  \wiki{Atiyah-Singer_index_theorem}{ellipticity
    of quadratic forms, metrics and operators}.
 On the other hand there are still a lot of white spots and obscure
 gaps between some subjects as well.

  To understand the action~\eqref{eq:moebius} in all EPH cases we use
  the Iwasawa decomposition~\cite{Lang85} of \(\SL=ANK\) into
  \emph{three} one-dimensional subgroups \(A\), \(N\) and 
  \(K\):
  \begin{equation}
    \label{eq:iwasawa-decomp}
    \begin{pmatrix}
      a&b \\c &d
    \end{pmatrix}= {\begin{pmatrix} \alpha & 0\\0&\alpha^{-1}
      \end{pmatrix}} {\begin{pmatrix} 1&\nu \\0&1
      \end{pmatrix}} {\begin{pmatrix}
        \cos\phi &  \sin\phi\\
        -\sin\phi & \cos\phi
      \end{pmatrix}}.
  \end{equation}
  Subgroups \(A\) and \(N\) act in~\eqref{eq:moebius} irrespectively
  to value of \(\sigma\): \(A\) makes a dilation by \(\alpha^2\), i.e.
  \(z\mapsto \alpha^2z\), and \(N\) shifts points to left by \(\nu\),
  i.e. \(z\mapsto z+\nu\).

\begin{figure}[htbp]
  \centering
  \includegraphics[scale=.5]{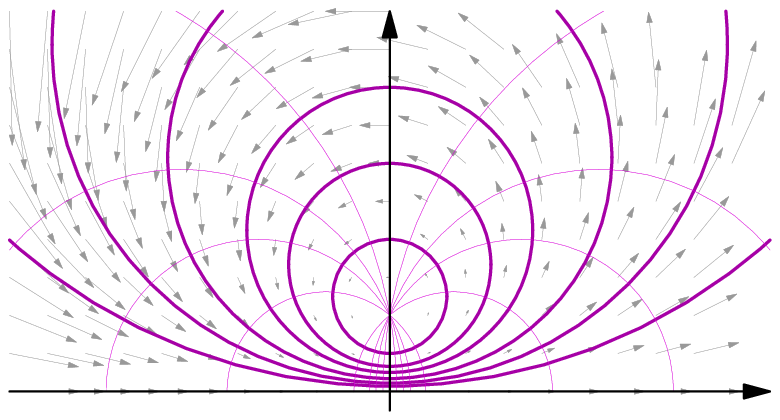}\hfill
  \includegraphics[scale=.5]{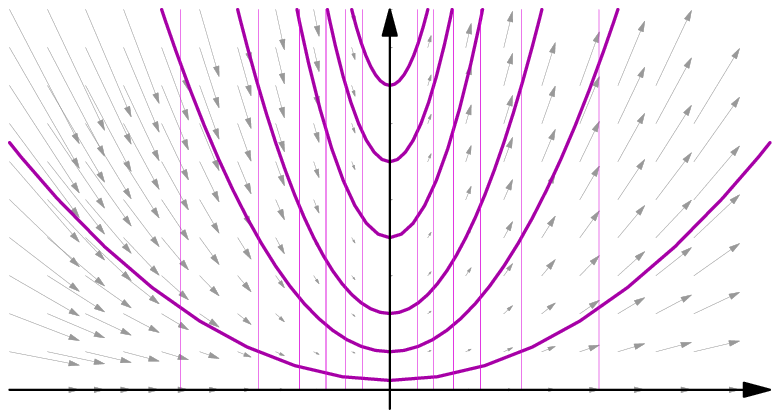}\hfill
  \includegraphics[scale=.5]{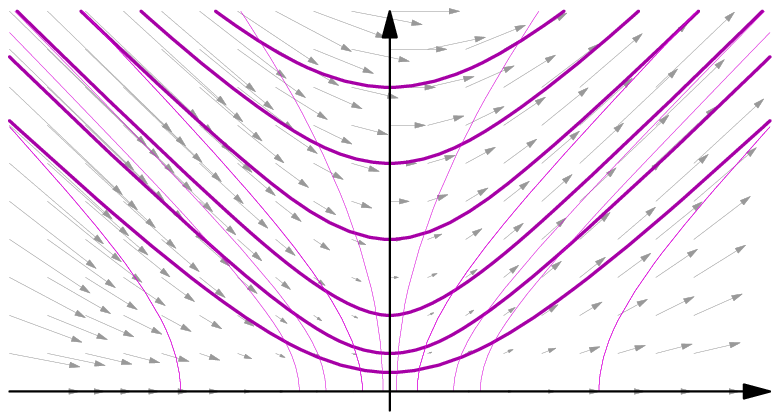}
  \caption[Action of the $K$ subgroup]{\small Action of the \(K\) subgroup.
    The corresponding \(K\)-orbits are thick circles, parabolas and
    hyperbolas. Thin traversal lines are images of the vertical axis
    for certain values of the parameter \(\phi\).}
  \label{fig:k-subgroup}
\end{figure}

  By contrast, the action of the third matrix from the subgroup \(K\)
  sharply depends on \(\sigma\), see Fig.~\ref{fig:k-subgroup}. In
  elliptic, parabolic and hyperbolic cases \(K\)-orbits are circles,
  parabolas and (equilateral) hyperbolas correspondingly.  Thin
  traversal lines in Fig.~\ref{fig:k-subgroup} join points of orbits
  for the same values of \(\phi\) and grey arrows represent ``local
  velocities''---vector fields of derived representations.

\subsection{Erlangen program at large}
  \label{sec:erlangen-program-at}

  As we already mentioned the division of mathematics into areas is
  only apparent. Therefore it is unnatural to limit Erlangen program
  only to ``geometry''. We may continue to look for \(\SL\) invariant
  objects in other related fields. For example,
  transform~\eqref{eq:moebius} generates unitary
  representations on certain \(\FSpace{L}{2}\) spaces, cf.~\eqref{eq:moebius}:
  
  \begin{equation}
    \label{eq:hardy-repres}
    g^{-1}: f(x)\mapsto \frac{1}{(cx+d)^m}f\left(\frac{ax+b}{cx+d}\right).
  \end{equation}

  For \(m=1\), \(2\), \ldots the invariant subspaces of
  \(\FSpace{L}{2}\) are Hardy and (weighted) Bergman spaces of complex
  analytic functions.  All main objects of \emph{complex analysis}
  (Cauchy and Bergman integrals, Cauchy-Riemann and Laplace equations,
  Taylor series etc.) may be obtaining in terms of invariants of the
  \emph{discrete series} representations of
  \(\SL\)~\cite{Kisil02c}*{\S~3}.  Moreover two other series
  (\emph{principal} and \emph{complimentary}~\cite{Lang85}) play the
  similar r\^oles for hyperbolic and parabolic
  cases~\citelist{\cite{Kisil02c} \cite{Kisil05a}}.

  Moving further we may observe that transform~\eqref{eq:moebius} is
  defined also for an element \(x\) in any algebra \(\algebra{A}\)
  with a unit \(\mathbf{1}\) as soon as
  \((cx+d\mathbf{1})\in\algebra{A}\) has an inverse. If
  \(\algebra{A}\) is equipped with a topology, e.g. is a Banach
  algebra, then we may study a \emph{functional calculus} for element
  \(x\)~\cite{Kisil02a} in this way. It is defined as an intertwining
  operator between the representation~\eqref{eq:hardy-repres} in a
  space of analytic functions and a similar representation in a left
  \(\algebra{A}\)-module.

  In the spirit of Erlangen program such functional calculus is still
  a geometry, since it is dealing with invariant properties under
  a group action. However even for a simplest non-normal operator, e.g.
  a Jordan block of the length \(k\), the obtained space is not like a
  space of point but is rather a space of \(k\)-th
  \emph{jets}~\cite{Kisil02a}. Such non-point behaviour is oftenly
  attributed to \emph{non-commutative geometry} and Erlangen program
  provides an important input on this fashionable topic~\cite{Kisil02c}.

  Of course, there is no reasons to limit Erlangen program to \(\SL\)
  group only, other groups may be more suitable in different
  situations.  However \(\SL\) still possesses a big unexplored potential
  and is a good object to start with.

\section{Geometry}
\label{sec:geometry}

\subsection{Cycles as Invariant Objects}
\label{sec:cycles-as-invariant}

  \begin{defn}
    \label{de:cycle}
    The common name \emph{cycle}~\cite{Yaglom79} is used to
    denote circles, parabolas and hyperbolas (as well as straight lines
    as their limits) in the respective EPH case.
  \end{defn}

\begin{figure}[htbp]
  \centering
  (a)\includegraphics[scale=.9]
  {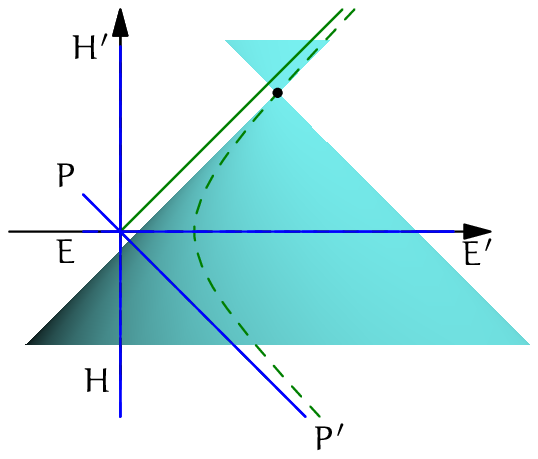}\hspace{2cm}
  (b)\includegraphics[scale=.9]
  {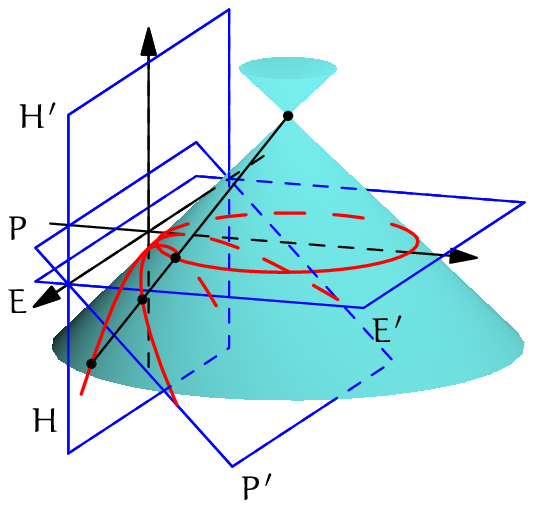}
  \caption[$K$-orbits as conic sections]{\small \(K\)-orbits as conic
    sections: 
    circles are sections by the plane \(EE'\);  parabolas are
    sections by \(PP'\);  hyperbolas are sections by \(HH'\). Points
    on the same generator of the cone correspond to the same value of \(\phi\).}
  \label{fig:k-orbit-sect}
\end{figure}

  It is well known that any cycle is a \emph{conic sections} and an
  interesting observation is that corresponding \(K\)-orbits are in
  fact sections of the same two-sided right-angle cone, see
  Fig.~\ref{fig:k-orbit-sect}.  Moreover, each straight line
  generating the cone, see Fig.~\ref{fig:k-orbit-sect}(b), is crossing
  corresponding EPH \(K\)-orbits at 
  points with the same value of parameter \(\phi\)
  from~\eqref{eq:iwasawa-decomp}. In other words, all three types of
  orbits are generated by the rotations of this generator along the
  cone.

  \(K\)-orbits are \(K\)-invariant in a trivial way. Moreover since
  actions of  both \(A\) and \(N\) for any \(\sigma\) are extremely
  ``shape-preserving'' we find natural invariant objects of the
  M\"obius map: 

  \begin{thm}[\cite{Kisil06a}]
    The family of all cycles from Defn.~\ref{de:cycle} is invariant under the
    action~\eqref{eq:moebius}.
  \end{thm}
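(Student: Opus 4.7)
\medskip

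\noindent\textbf{Proof proposal.} The plan is to exploit the Iwasawa decomposition \(\SL=ANK\) from~\eqref{eq:iwasawa-decomp} and check invariance of the family of cycles under each factor separately. Since the group is generated by \(A\), \(N\) and \(K\) and composition of M\"obius maps corresponds to matrix multiplication, it suffices to establish that each of the three one-parameter subgroups sends cycles to cycles in every EPH case.

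The subgroups \(A\) and \(N\) are disposed of first, because they act in the three geometries by the uniform formulae \(z\mapsto \alpha^{2}z\) and \(z\mapsto z+\nu\). Dilations by a positive real constant and real translations preserve the type of a conic: circles map to circles, parabolas to parabolas and hyperbolas to hyperbolas, and straight lines map to straight lines. This can be read off directly from the implicit equation
\begin{equation*}
  k(u^{2}-\sigma v^{2})-2lu-2nv+m=0,\qquad z=u+\rmi v,\ \rmi^{2}=\sigma,
\end{equation*}
of an arbitrary cycle: substituting \(z\mapsto \alpha^{2}z\) or \(z\mapsto z+\nu\) yields an equation of the same form with a rescaled/shifted quadruple \((k,l,n,m)\). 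Straight lines correspond to \(k=0\) and are clearly preserved.

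The remaining and main step is the \(K\)-subgroup. The idea is to combine two observations. First, the \(K\)-orbits are themselves cycles in each case, by Fig.~\ref{fig:k-subgroup} and its interpretation through the conic sections of Fig.~\ref{fig:k-orbit-sect}. Second, for any point \(z_{0}\) and any cycle \(C\) through \(z_{0}\) there is exactly one \(K\)-orbit through \(z_{0}\) tangent to \(C\) up to the required order; hence mapping by a \(K\)-element reduces, after applying a suitable \(A\)- and \(N\)-translate, to the obvious orbit-preserving action. Concretely, I would substitute
\begin{equation*}
  z\mapsto \frac{z\cos\phi+\sin\phi}{-z\sin\phi+\cos\phi}
\end{equation*}
into the above quadratic equation and verify by a direct (but bookkeeping-heavy) calculation that the resulting equation, after clearing the common denominator \((-u\sin\phi+\cos\phi)^{2}-\sigma(v\sin\phi)^{2}\), is again of the form \(k'(u^{2}-\sigma v^{2})-2l'u-2n'v+m'=0\). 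The denominator is never identically zero on the cycle in the parabolic and hyperbolic cases, once the limit straight-line case is included.

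The expected obstacle is precisely this last algebraic verification in the parabolic and hyperbolic cases, where \(\rmi^{2}=0\) or \(1\) forces one to work with zero divisors in the underlying number system and to treat the straight-line limit \(k=0\) by hand. A slicker alternative, which I would use if the direct substitution becomes unwieldy, is to encode a cycle by the symmetric matrix \(\bigl(\begin{smallmatrix} l+\rmi n & -m\\ k & -l+\rmi n\end{smallmatrix}\bigr)\) and show that the M\"obius action~\eqref{eq:moebius} corresponds to conjugation of this matrix by \(g\in\SL\); invariance of the family of cycles then follows tautologically, since conjugation preserves the space of such matrices. Either route completes the proof.
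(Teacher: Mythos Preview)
Your strategy---Iwasawa decomposition, \(A\) and \(N\) handled by direct substitution, \(K\) handled either by brute computation or by the matrix/conjugation encoding---is exactly the route the paper takes: the paragraph preceding the theorem is the heuristic (``\(K\)-orbits are \(K\)-invariant in a trivial way\ldots\ \(A\) and \(N\) \ldots\ are extremely shape-preserving''), and the rigorous version is precisely the FSCc similarity of Thm.~\ref{th:FSCc-intertwine}, which is your ``slicker alternative''. So the overall approach is correct and matches the paper.

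One caution: your intermediate argument for \(K\) (``there is exactly one \(K\)-orbit through \(z_{0}\) tangent to \(C\)\ldots\ hence mapping by a \(K\)-element reduces, after applying a suitable \(A\)- and \(N\)-translate, to the obvious orbit-preserving action'') is not a proof. Tangency at a point does not force two cycles to coincide, and an \(AN\)-translate that carries \(C\) to a \(K\)-orbit will not in general commute with the given \(K\)-rotation, so the ``reduction'' does not go through. You should drop that paragraph and rely directly on either the substitution or, better, the conjugation identity \(\tilde C=gCg^{-1}\); the latter is what the paper actually proves (by CAS or via orthogonality) and it makes the cycle-to-cycle statement immediate.
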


  According to Erlangen ideology we shall 
  study invariant
  properties of cycles.

  \subsection{Invariance of FSCc}
  \label{sec:invariance-fscc}

  Fig.~\ref{fig:k-orbit-sect} suggests that we may get a unified
  treatment of cycles in all EPH by consideration of a higher
  dimension spaces. The standard mathematical method is to declare
  objects under investigations (cycles in our case, functions in
  functional analysis, etc.) to be simply points of some bigger
  space. This space should be equipped with an appropriate structure
  to hold externally information which were previously inner
  properties of our objects.
  
  A generic cycle is the set of points \((u,v)\in\Space{R}{2}\)
  defined for all values of \(\sigma\) by the equation
  \begin{equation}
    \label{eq:cycle-eq}
    k(u^2-\sigma v^2)-2lu-2nv+m=0.
  \end{equation}
  This equation (and the corresponding cycle) is defined by a point
  \((k, l, n, m)\) from a projective space \(\Space{P}{3}\), since for
  a scaling factor \(\lambda \neq 0\) the point \((\lambda k, \lambda
  l, \lambda n, \lambda m)\) defines the same
  equation~\eqref{eq:cycle-eq}. We call \(\Space{P}{3}\) the
  \emph{cycle space} and refer to the initial \(\Space{R}{2}\) as the
  \emph{point space}.

  In order to get a connection with M\"obius action~\eqref{eq:moebius}
  we arrange numbers \((k, l, n, m)\) into the matrix 
  \begin{equation}
    \label{eq:FSCc-matrix}
    C_{\bs}^s=\begin{pmatrix}
      l+\rmc s n&-m\\k&-l+\rmc s n
    \end{pmatrix}, 
  \end{equation}
  with a new imaginary unit \(\rmc\) and an additional parameter \(s\)
  usually equal to \(\pm 1\). The values of \(\bs:=\rmc^2\) is \(-1\),
  \(0\) or \(1\) independently from the value of \(\sigma\).  The
  matrix~\eqref{eq:FSCc-matrix} is the cornerstone of (extended)
  Fillmore--Springer--Cnops construction (FSCc)~\cite{Cnops02a} and
  closely related to technique recently used by A.A.~Kirillov to study
  the Apollonian gasket~\cite{Kirillov06}.

  The significance of FSCc in Erlangen framework is provided by the
  following result:
  \begin{thm}
    \label{th:FSCc-intertwine}
    The image  \(\tilde{C}_{\bs}^s\) of a cycle \(C_{\bs}^s\) under
    transformation~\eqref{eq:moebius} with \(g\in\SL\) is given by
    similarity of the matrix~\eqref{eq:FSCc-matrix}:
    \begin{equation}
      \label{eq:cycle-similarity}
      \tilde{C}_{\bs}^s= gC_{\bs}^sg^{-1}.
    \end{equation}
    In other words FSCc~\eqref{eq:FSCc-matrix} \emph{intertwines}
    M\"obius action~\eqref{eq:moebius} on cycles with
    linear map~\eqref{eq:cycle-similarity}.
  \end{thm}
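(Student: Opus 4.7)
The strategy is to reduce the cycle-level identity \(\tilde{C}_{\bs}^s = g C_{\bs}^s g^{-1}\) to a statement at the level of individual points. I would encode each point \(p=(u,v)\) of the plane by its own \(2\times 2\) matrix \(Z_p\), taking the natural candidate to be the ``zero-radius'' cycle based at \(p\), obtained from \eqref{eq:FSCc-matrix} by setting \(k=1\), \(l=u\), \(n=v\), \(m=u^2-\sigma v^2\). The proof then has two moving parts: an \emph{incidence relation} tying \(C_{\bs}^s\) to \(Z_p\), and a \emph{conjugation formula} describing how \(Z_p\) changes under a M\"obius map. Both are \(\SL\)-invariant scalar identities, so the theorem will fall out of invariance of trace/determinant under similarity.

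Step~(A), the incidence relation: I would verify by direct expansion that \(p\) lies on the cycle \(C_{\bs}^s\) if and only if a conjugation-invariant scalar built from \(C_{\bs}^s\) and \(Z_p\) vanishes. A convenient such scalar is a suitably normalised trace \(\tr(C_{\bs}^s Z_p)\) (possibly after a twist that flips the sign of \(\rmc\) on one factor). Expanding this product and collecting terms returns precisely the left-hand side of \eqref{eq:cycle-eq} up to a sign, once the parameter \(s\) is chosen so that \(\bs s^{2}\) matches the coefficient of \(v\) arising from the cross terms in \((l+\rmc s n)(u+\rmc s v)\); this is exactly the raison d'\^etre of the pair \((\bs,s)\) in \eqref{eq:FSCc-matrix}.

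Step~(B), the conjugation formula: I would show that for \(g\in\SL\), the M\"obius image \(g\cdot p\) has point matrix \(Z_{g\cdot p}\) equal to \(g Z_p g^{-1}\) up to a nonzero projective scalar (the factor \((cu{+}d{+}\rmi cv)^{-1}\)-type denominator appearing when one rewrites \eqref{eq:moebius} as a linear action on homogeneous coordinates). Combined with (A) this immediately proves the theorem, since
\begin{equation*}
\tr\bigl((g C_{\bs}^s g^{-1})(g Z_p g^{-1})\bigr)=\tr(C_{\bs}^s Z_p)=0
\end{equation*}
means \(g\cdot p\) lies on the cycle represented by \(g C_{\bs}^s g^{-1}\); as a cycle is determined by its point set (up to the projective rescaling built into \(\Space{P}{3}\)), we conclude \(\tilde{C}_{\bs}^s=g C_{\bs}^s g^{-1}\).

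The main obstacle is matching the two imaginary units across (A) and (B): points live in \(\Space{R}{2}\) with \(\rmi^{2}=\sigma\), while matrix entries use \(\rmc^{2}=\bs\), and these need not coincide. A clean way to bypass a nine-case case-check is to use the Iwasawa decomposition \eqref{eq:iwasawa-decomp}: both \(g\mapsto \tilde{C}_{\bs}^s\) and \(g\mapsto gC_{\bs}^sg^{-1}\) are continuous left \(\SL\)-actions agreeing at the identity, so it suffices to verify \eqref{eq:cycle-similarity} on each of the one-parameter subgroups \(A\), \(N\), \(K\). For \(A\) and \(N\) the M\"obius image is a dilation \(z\mapsto\alpha^{2}z\) or a shift \(z\mapsto z+\nu\), and the resulting transformation of the coefficients \((k,l,n,m)\) is read off immediately and matches the conjugation of \eqref{eq:FSCc-matrix} by the corresponding diagonal or unipotent matrix. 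Only the \(K\)-case requires genuine algebra, and there the identity reduces, after collecting powers of \(\rmc s\), to the elementary trigonometric computation implicit in the \(\sigma\)-independent form of the subgroup generator --- precisely the content of Fig.~\ref{fig:k-subgroup}.
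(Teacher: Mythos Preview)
Your two strategies are exactly the two the paper points to without spelling either out: a direct (CAS) verification, which your Iwasawa reduction organises by generators, and the orthogonality route of Cnops, which your Steps~(A)--(B) reproduce and which the paper gestures at via the incidence property~(\ref{it:ortho-incidence}) in \S\ref{sec:joint-invar-orth}.

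There is one genuine soft spot in (A). Expanding \(\tr(C_{\bs}^s Z_p)\) with \(Z_p=(1,u,v,u^2-\sigma v^2)\) gives
\[
-\bigl(k(u^2-\sigma v^2)-2lu-2\bs s^{2}\,nv+m\bigr),
\]
so the cycle equation~\eqref{eq:cycle-eq} is recovered only when \(\bs s^{2}=1\); your proposed twist \(\rmc\mapsto-\rmc\) handles \(\bs=-1\), but for \(\bs=0\) the \(n\)-term vanishes from the trace and no choice of \(s\) restores it. Rather than retreat to Iwasawa, a cleaner bypass is to note that conjugation by a real \(g\) is \(\bs\)-independent on the quadruple: writing \(C_{\bs}^s=C_0+\rmc s\,nI\) with \(C_0=\matr{l}{-m}{k}{-l}\), one has \(gC_{\bs}^sg^{-1}=gC_0g^{-1}+\rmc s\,nI\), so the induced map on \((k,l,n,m)\) does not see \(\bs\) at all. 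Hence running (A)+(B) for the single value \(\bs=1\), where the trace pairing \emph{does} encode incidence, already proves~\eqref{eq:cycle-similarity} for every \(\bs\). This also makes transparent why the M\"obius action on cycle coefficients turns out to be \(\sigma\)-independent---a fact your \(K\)-case computation would otherwise have to rediscover by hand.
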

  
  There are several ways to prove~\eqref{eq:cycle-similarity}: either
  by a brute force calculation (fortunately
  \href{http://arxiv.org/abs/cs.MS/0512073}{performed by a
    CAS})~\cite{Kisil05a} or through the related orthogonality of
  cycles~\cite{Cnops02a}, see the end of the next
  section~\ref{sec:invar-algebr-geom}. 

  The important observation here is that FSCc~\eqref{eq:FSCc-matrix}
  uses an imaginary unit \(\rmc\) which is not related to \(\rmi\)
  defining the appearance of cycles on plane. In other words any EPH
  type of geometry in the cycle space \(\Space{P}{3}\) admits drawing
  of cycles in the point space \(\Space{R}{2}\) as circles, parabolas
  or hyperbolas. We may think on points of \(\Space{P}{3}\) as ideal
  cycles while their depictions on \(\Space{R}{2}\) are only their
  shadows on the wall of
  \href{http://en.wikipedia.org/wiki/Plato#Metaphysics}{Plato's
    cave}. 

\begin{figure}[htbp]
  \centering
  (a) \includegraphics[scale=0.8]{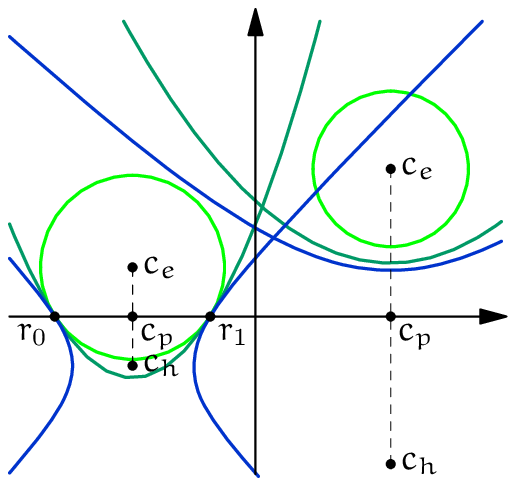}\hspace{1cm}
  (b) \includegraphics[scale=0.8]{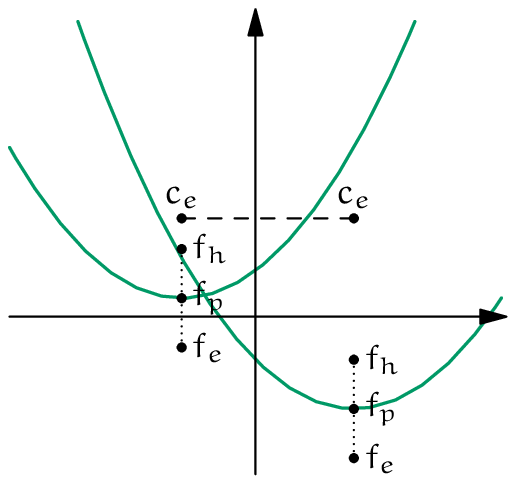}
  \caption[Cycle implementations, centres and foci]{\small
    (a) Different
    EPH implementations of the same cycles defined by quadruples of
    numbers.\\
    (b) Centres and foci of two parabolas with the same focal length.} 
  \label{fig:eph-cycle}
\end{figure}
  Fig.~\ref{fig:eph-cycle}(a) shows the same cycles drawn
  in different EPH styles. Points \(c_{e,p,h}=(\frac{l}{k}, -\sigma
  \frac{n}{k})\) are their respective e/p/h-centres. They are related to
  each other through several identities:
  \begin{equation}
    \label{eq:centres}
    c_e=\bar{c}_h, \quad c_p=\frac{1}{2}(c_e+c_h).
  \end{equation}
  Fig.~\ref{fig:eph-cycle}(b) presents two cycles drawn as parabolas,
  they have the same focal length \(\frac{n}{2k}\) and thus their
  e-centres are on the same level. In other words \emph{concentric}
  parabolas are obtained by a vertical shift, not
  scaling as an analogy with circles or hyperbolas may suggest. 

  Fig.~\ref{fig:eph-cycle}(b) also presents points, called e/p/h-foci:
  \begin{equation}
    \label{eq:foci}
    f_{e,p,h}=\left(\frac{l}{k}, -\frac{\det C_{\bs}^s}{2nk}\right),
  \end{equation}
  which are independent of the sign of \(s\).  If a cycle is depicted
  as a parabola then h-focus, p-focus, e-focus are correspondingly
  geometrical focus of the parabola, its vertex, and the point on the
  directrix nearest to the vertex.

  As we will see, cf. Thms.~\ref{th:ghost1} and~\ref{th:ghost2}, all
  three centres and three foci are useful  
  attributes of a cycle even if it is drawn as a circle.

  \subsection{Invariants: algebraic and geometric}
  \label{sec:invar-algebr-geom}

  We use known algebraic invariants of matrices to build appropriate
  geometric invariants of cycles. It is yet another demonstration that
  any division of mathematics into subjects is only illusive.

  For \(2\times 2\) matrices (and thus cycles) there are only two
  essentially different invariants under
  similarity~\eqref{eq:cycle-similarity} (and thus under M\"obius
  action~\eqref{eq:moebius}): the \emph{trace} and the
  \emph{determinant}.  The latter was already used in~\eqref{eq:foci}
  to define cycle's foci. However due to projective nature of the
  cycle space \(\Space{P}{3}\) the absolute values of trace or
  determinant are irrelevant, unless they are zero.

  Alternatively we may have a special arrangement for normalisation of
  quadruples \((k,l,n,m)\). For example, if \(k\neq0\) we may
  normalise the quadruple to
  \((1,\frac{l}{k},\frac{n}{k},\frac{m}{k})\) with highlighted cycle's
  centre. Moreover in this case \(\det \cycle{s}{\bs}\) is equal to
  the square of cycle's radius, cf. Section~\ref{sec:dist-lenght-perp}.
  Another normalisation \(\det \cycle{s}{\bs}=1\) is used
  in~\cite{Kirillov06} to get a nice condition for touching circles.

  We still get important characterisation even with non-normalised
  cycles, e.g., invariant classes (for different \(\bs\)) of
  cycles are defined by the condition \(\det C_{\bs}^s=0\). Such a
  class is parametrises only by two real number and as such is easily
  attached to certain point of \(\Space{R}{2}\). For example, the
  cycle \(C_{\bs}^s\) with \(\det C_{\bs}^s=0\), \(\bs=-1\) drawn
  elliptically represent just a point \((\frac{l}{k},\frac{n}{k})\),
  i.e. (elliptic) zero-radius circle.  The same condition with
  \(\bs=1\) in hyperbolic drawing produces a null-cone originated at
  point \((\frac{l}{k},\frac{n}{k})\):
  \begin{displaymath}
    (u-\frac{l}{k})^2-(v-\frac{n}{k})^2=0,
  \end{displaymath}
  i.e. a zero-radius cycle in hyperbolic metric. 

\begin{figure}[htbp]
  \centering
  \includegraphics[scale=.6]{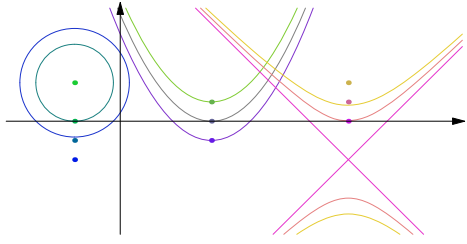}
  \caption[Different implementations of the same
    zero-radius cycles]{\small Different \(\rmi\)-implementations of the same
    \(\bs\)-zero-radius cycles and corresponding foci.}
  \label{fig:zero-radius}
\end{figure}
  In general for every notion there is nine possibilities: three EPH
  cases in the cycle space times three EPH realisations in the point
  space. Such nine cases for ``zero radius'' cycles is shown on
  Fig.~\ref{fig:zero-radius}. For example, p-zero-radius cycles in any
  implementation touch the real axis.

  This ``touching'' property is a manifestation of the \emph{boundary
    effect} in the upper-half plane
  geometry~\cite{Kisil05a}*{Rem.~3.4}. The famous question
  \href{http://en.wikipedia.org/wiki/Hearing_the_shape_of_a_drum}{on
    hearing drum's shape} has a sister:
  \begin{quote}
  \emph{Can we see/feel the boundary from inside a domain?}
  \end{quote}
  Both orthogonality relations  described below are ``boundary
  aware'' as well. It is not surprising after all since \(\SL\) action
  on the upper-half plane was obtained as an extension of its
  action~\eqref{eq:moebius} on the boundary. 

  According to the \wiki{Category_theory}{categorical viewpoint}
  internal properties of objects are of minor importance in comparison
  to their relations with other objects from the same class. 
Thus from now on we will
  look for invariant relations between two or more cycles.

  \subsection{Joint invariants: orthogonality}
  \label{sec:joint-invar-orth}

  The most expected relation between cycles is based on the following
  M\"obius invariant ``inner product'' build from a trace of
  product of two cycles as matrices:
  \begin{equation}
    \label{eq:inner-prod}
    \scalar{C_{\bs}^s}{\tilde{C}_{\bs}^s}= \tr (C_{\bs}^s\tilde{C}_{\bs}^s)
  \end{equation}
  By the way, an inner product of this type is used, for example, in
  \wiki{Gelfand-Naimark-Segal_construction}{GNS construction} to make
  a Hilbert space out of \(C^*\)-algebra.  The next standard move is
  given by the following definition.
  \begin{defn}
    \label{de:orthogonality}
    Two cycles are called \(\bs\)-orthogonal if
    \(\scalar{C_{\bs}^s}{\tilde{C}_{\bs}^s}=0\). 
  \end{defn}
  For the case of \(\bs \sigma=1\), i.e. when geometries of the cycle
  and point spaces are both either elliptic or hyperbolic, such an
  orthogonality is the standard one, defined in terms of angles
  between tangent lines in the intersection points of two cycles.
  However in the remaining seven (\(=9-2\)) cases the innocent-looking
  Defn.~\ref{de:orthogonality} brings unexpected relations.

\begin{figure}[htbp]
  \includegraphics[scale=.85]{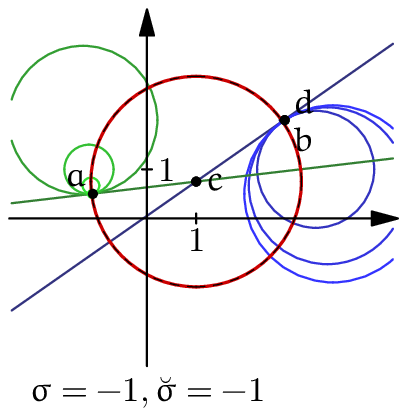}\hspace{1cm}
  \includegraphics[scale=.85]{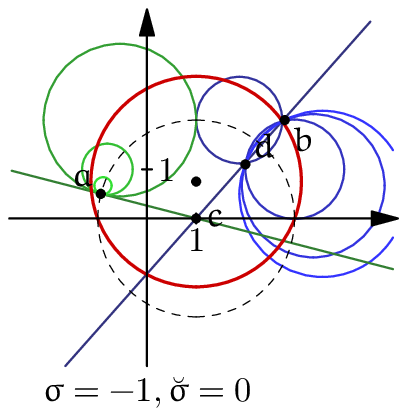}\hspace{1cm}
  \includegraphics[scale=.85]{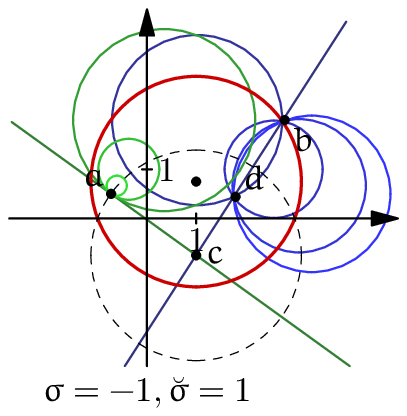}  \caption[Orthogonality of the
  first kind]{\small Orthogonality of the first
    kind in the elliptic point space.\\
    Each picture presents two groups (green and blue) of cycles which
    are orthogonal to the red cycle \(C^{s}_{\bs}\).  Point \(b\)
    belongs to \(C^{s}_{\bs}\) and the family of blue cycles
    passing through \(b\) is orthogonal to \(C^{s}_{\bs}\). They
    all also intersect in the point \(d\) which is the inverse of
    \(b\) in \(C^{s}_{\bs}\). Any orthogonality is reduced to the usual
    orthogonality with a new (``ghost'') cycle (shown by the dashed
    line), which may or may not coincide with \(C^{s}_{\bs}\). For
    any point \(a\) on the ``ghost'' cycle the orthogonality is
    reduced to the local notion in the terms of tangent lines at the
    intersection point. Consequently such a point \(a\) is always the
    inverse of itself.}
  \label{fig:orthogonality1}
\end{figure}
  Elliptic (in the point space) realisations of
  Defn.~\ref{de:orthogonality}, i.e. \(\sigma=-1\) is shown in
  Fig.~\ref{fig:orthogonality1}. The left picture corresponds to the
  elliptic cycle space, e.g. \(\bs=-1\). The orthogonality between
  the red circle and any circle from the blue or green families is
  given in the usual Euclidean sense. The central (parabolic in the
  cycle space) and the right (hyperbolic) pictures show non-local
  nature of the orthogonality.  There are analogues pictures in
  parabolic and hyperbolic point spaces as well~\cite{Kisil05a}.

  This orthogonality may still be expressed in the traditional sense
  if we will associate to the red circle the corresponding ``ghost''
  circle, which shown by the dashed line in Fig.~\ref{fig:orthogonality1}.
  To describe ghost cycle we need the
  \wiki{Heaviside_step_function}{\emph{Heaviside function}} \(\chi(\sigma)\):
  \begin{equation}
    \label{eq:heaviside-function}
    \chi(t)=\left\{
      \begin{array}{ll}
        1,& t\geq 0;\\
        -1,& t<0.
      \end{array}\right.
  \end{equation}

  \begin{thm}
    \label{th:ghost1}
    A cycle is \(\bs\)-orthogonal to cycle \(C_{\bs}^s\) if it is
    orthogonal in the usual sense to the \(\sigma\)-realisation of
    ``ghost'' cycle \(\hat{C}_{\bs}^s\), which is defined by the
    following two conditions:
    \begin{enumerate}
    \item \label{item:centre-centre-rel}
      \(\chi(\sigma)\)-centre of \(\hat{C}_{\bs}^s\) coincides
      with  \(\bs\)-centre of \(C_{\bs}^s\).
    \item Cycles \(\hat{C}_{\bs}^s\) and \(C^{s}_{\bs}\) have the same
      roots, moreover \(\det \hat{C}_{\sigma}^1= \det C^{\chi(\bs)}_{\sigma}\).
    \end{enumerate}
  \end{thm}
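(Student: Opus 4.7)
The plan is to show that both $\bs$-orthogonality of two cycles and the classical orthogonality of their $\sigma$-realisations are given by bilinear forms in the quadruples $(k,l,n,m)$ and $(\tilde k,\tilde l,\tilde n,\tilde m)$, and that they differ only in how the deformation parameter ($\bs$ or $\sigma$, respectively) enters. Replacing $C_{\bs}^s$ by a suitably transformed ghost $\hat C_{\bs}^s$ then converts one form into the other, and reading off the data of $\hat C_{\bs}^s$ gives precisely the conditions in the statement.

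First I would expand $\scalar{C_{\bs}^s}{\tilde C_{\bs}^s}=\tr(C_{\bs}^s\tilde C_{\bs}^s)$ directly from the FSCc matrix~\eqref{eq:FSCc-matrix}. Using $\rmc^2=\bs$ and $s^2=1$, the cross terms $\rmc s(l\tilde n+n\tilde l)$ cancel between the two diagonal entries of the product, leaving a real bilinear form whose only $\bs$-dependence sits in the coefficient of $n\tilde n$; in particular the form is independent of $\sigma$. Next, for a fixed $\sigma$, I would derive the classical orthogonality condition $d_{\sigma}^2(c,\tilde c)=r_{\sigma}^2+\tilde r_{\sigma}^2$ from~\eqref{eq:cycle-eq} by completing the square: the $\sigma$-centre is $(l/k,-\sigma n/k)$ and the squared $\sigma$-radius is proportional to $\det C_\sigma^1$. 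Expanding this identity reproduces the same type of bilinear form, but now with $\sigma$ in the slot previously occupied by $\bs$.

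With these two expressions side by side, the construction of $\hat C_{\bs}^s$ is forced by matching coefficients: take $\hat k,\hat l,\hat m$ proportional to $k,l,m$ and $\hat n$ proportional to $(\bs/\chi(\sigma))\,n$, with the common projective scale pinned down by the determinant identity of condition~(ii). A direct substitution then shows that classical $\sigma$-orthogonality of $\tilde C$ to $\hat C_{\bs}^s$ coincides with $\bs$-orthogonality of $\tilde C$ to $C_{\bs}^s$. Condition~(i) follows by computing the $\chi(\sigma)$-centre of $\hat C_{\bs}^s$, namely $(\hat l/\hat k,-\chi(\sigma)\hat n/\hat k)=(l/k,-\bs n/k)$, which is exactly the $\bs$-centre of $C_{\bs}^s$. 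The same-roots half of condition~(ii) is immediate on restricting both equations to $v=0$, where only $k,l,m$ (scaled by a common factor) appear, and the remaining determinant identity then fixes the outstanding projective scale via $\chi(\bs)^2=\chi(\sigma)^2=1$.

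The main obstacle will be the uniform treatment of the parabolic values $\bs=0$ or $\sigma=0$, where the naive ratio $\bs/\sigma$ is either zero or ill-defined. The Heaviside factor $\chi$ is introduced precisely to absorb this: when $\sigma=0$ the $\chi(\sigma)$-centre is computed in elliptic (or, by convention, hyperbolic) geometry rather than in the degenerate parabolic one, and likewise for $\chi(\bs)$ inside the determinant identity. I would therefore verify the conclusion case-by-case for each of the nine $(\sigma,\bs)$-combinations of Fig.~\ref{fig:zero-radius}, exploiting the projective nature of the cycle space so that the normalisation fixed by condition~(ii) always yields a well-defined point of $\Space{P}{3}$, and using continuity together with closedness of the $\bs$-orthogonal locus to handle the boundary values as limits of the generic $\bs,\sigma=\pm 1$ cases.
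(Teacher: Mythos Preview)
The paper does not actually supply a proof of this theorem: it is an outline article, and Theorem~\ref{th:ghost1} is stated without argument, the detailed verification being delegated to~\cite{Kisil05a}, where such identities are checked by a computer algebra system. Your proposal is therefore not competing with an in-text proof but with a CAS computation, and your strategy---expand \(\tr(C_{\bs}^s\tilde C_{\bs}^s)\) to the bilinear form \(2l\tilde l+2\bs n\tilde n-m\tilde k-k\tilde m\), compare it with the analogous form encoding tangential orthogonality in the \(\sigma\)-picture, and read off the ghost data---is exactly the hand version of that computation and is the right approach.

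One point deserves tightening. In the generic step you match coefficients and obtain \(\hat n=\lambda(\bs/\chi(\sigma))n\); substituting this back into the \(\sigma\)-orthogonality form gives a coefficient \(\sigma\bs/\chi(\sigma)\) in front of \(n\tilde n\), and you need this to equal \(\bs\). That forces \(\sigma=\chi(\sigma)\), which holds for \(\sigma=\pm1\) but fails for \(\sigma=0\). So the parabolic point-space case is not merely a limiting nuisance: the ``usual'' orthogonality of parabolas is itself a degenerate notion (the \(n\tilde n\) term drops out), and the ghost construction compensates by moving all the \(\bs\)-dependence into \(\hat n\), which then no longer influences the orthogonality condition at all when \(\sigma=0\). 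You should make this explicit rather than appeal to continuity, since the locus \(\sigma=0\) is not in the closure of \(\sigma=\pm1\) in any useful sense here. Apart from this, the reading of conditions~(i) and~(ii) from the matched coefficients is correct, and the determinant clause in~(ii) does fix the projective scale as you say.
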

  The above connection between various centres of cycles illustrates
  their meaningfulness within our approach.

  One can easy check the following orthogonality properties of the
  zero-radius cycles defined in the previous section:
  \begin{enumerate}
  \item Since \(\scalar{C_{\bs}^s}{{C}_{\bs}^s}=\det {C}_{\bs}^s\)
    zero-radius cycles are  self-orthogonal (isotropic) ones.
  \item \label{it:ortho-incidence}
    A cycle \(\cycle{s}{\bs}\) is \(\sigma\)-orthogonal to a zero-radius
    cycle \(\zcycle{s}{\bs}\) if and only if \(\cycle{s}{\bs}\) passes
    through the \(\sigma\)-centre of \(\zcycle{s}{\bs}\).
  \end{enumerate}

  \subsection{Higher order joint invariants: s-orthogonality}
  \label{sec:higher-order-joint}

  With appetite already wet one may wish to build more joint
  invariants. Indeed for any homogeneous polynomial
  \(p(x_1,x_2,\ldots,x_n)\) of several non-commuting variables one may
  define an invariant joint disposition of \(n\) cycles
  \({}^j\!\cycle{s}{\bs}\) by the condition:
  \begin{displaymath}
    \tr p({}^1\!\cycle{s}{\bs}, {}^2\!\cycle{s}{\bs}, \ldots,  {}^n\!\cycle{s}{\bs})=0.
  \end{displaymath}
  However it is preferable to keep some geometrical meaning of
  constructed notions.

  An interesting observation is that in the matrix similarity of
  cycles~\eqref{eq:cycle-similarity} one may replace element
  \(g\in\SL\) by an arbitrary matrix corresponding to another cycle.
  More precisely the product
  \(\cycle{s}{\bs}\cycle[\tilde]{s}{\bs}\cycle{s}{\bs}\) is again the
  matrix of the form~\eqref{eq:FSCc-matrix} and thus may be associated
  to a cycle. This cycle may be considered as the reflection of
  \(\cycle[\tilde]{s}{\bs}\) in \(\cycle{s}{\bs}\).
  \begin{defn}
    \label{de:s-ortho}
    A cycle \(\cycle{s}{\bs}\) is s-orthogonal \emph{to} a cycle
    \(\cycle[\tilde]{s}{\bs}\) if the reflection of
    \(\cycle[\tilde]{s}{\bs}\) in \(\cycle{s}{\bs}\) is orthogonal
    (in the sense of Defn.~\ref{de:orthogonality}) to the real line.
    Analytically this is defined by:
    \begin{equation}
      \label{eq:s-orthog-def}
      \tr(\cycle{s}{\bs} \cycle[\tilde]{s}{\bs}\cycle{s}{\bs}\realline{s}{\bs})=0.
    \end{equation}
  \end{defn}
  Due to invariance of all components in the above definition
  s-orthogonality is a M\"obius invariant condition. Clearly
  this is not a symmetric relation: if \(\cycle{s}{\bs}\) is s-orthogonal to
  \(\cycle[\tilde]{s}{\bs}\) then \(\cycle[\tilde]{s}{\bs}\) is not
  necessarily s-orthogonal to
  \(\cycle{s}{\bs}\).
  
\begin{figure}[htbp]
  \includegraphics[scale=.85]{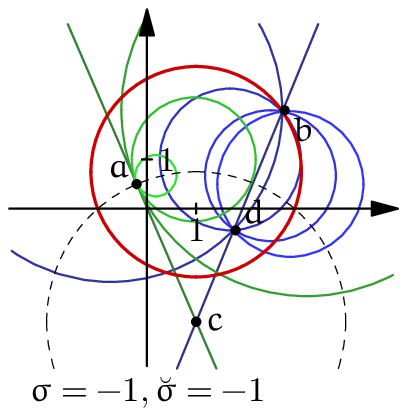}\hspace{1cm}
  \includegraphics[scale=.85]{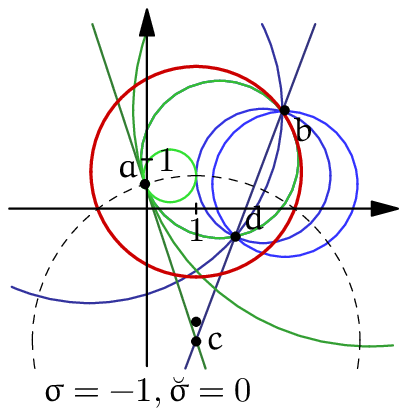}\hspace{1cm}
  \includegraphics[scale=.85]{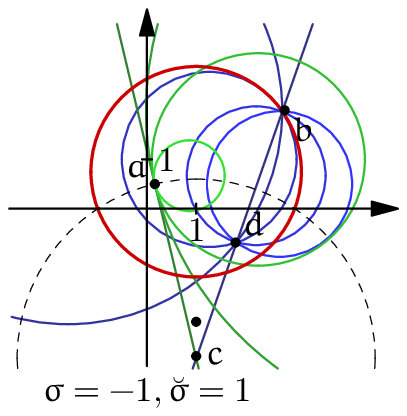}
  \caption[Orthogonality of the second kind]{\small Orthogonality of the
    second kind for circles. To highlight both
    similarities and distinctions with the ordinary orthogonality we
    use the same notations as that in Fig.~\ref{fig:orthogonality1}.}
  \label{fig:orthogonality2}
\end{figure}

  Fig.~\ref{fig:orthogonality2} illustrates s-orthogonality in the
  elliptic point space. By contrast with Fig.~\ref{fig:orthogonality1}
  it is not a local notion at the intersection points of cycles
  for all \(\bs\). However it may be again clarified in terms of the
  appropriate s-ghost cycle, cf. Thm.~\ref{th:ghost1}.
  \begin{thm}
    \label{th:ghost2}
    A cycle is s-orthogonal to a cycle \(C^{s}_{\bs}\) if its
    orthogonal in the traditional sense to its \emph{s-ghost cycle}
    \(\cycle[\tilde]{\bs}{\bs} = \cycle{\chi(\sigma)}{\bs}
    \Space[\bs]{R}{\bs} \cycle{\chi(\sigma)}{\bs}\), which is the
    reflection of the real line in \(\cycle{\chi(\sigma)}{\bs}\) and
    \(\chi\) is the \emph{Heaviside
      function}~\eqref{eq:heaviside-function}.  Moreover
    \begin{enumerate}
    \item \label{item:focal-centre-rel} \(\chi(\sigma)\)-Centre of
      \(\cycle[\tilde]{\bs}{\bs}\) coincides with the \(\bs\)-focus of
      \(\cycle{s}{\bs}\), consequently all lines s-orthogonal to
      \(\cycle{s}{\bs}\) are passing the respective focus.
    \item Cycles \(\cycle{s}{\bs}\) and \(\cycle[\tilde]{\bs}{\bs}\)
      have the same roots.
    \end{enumerate}
  \end{thm}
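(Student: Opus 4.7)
The plan is a two-stage reduction: first use cyclicity of the trace to convert s-orthogonality into $\bs$-orthogonality against the reflection of the real line, and then apply Theorem~\ref{th:ghost1} to turn that $\bs$-orthogonality into traditional $\sigma$-orthogonality against a ghost cycle, which will be shown to be the s-ghost $\cycle[\tilde]{\bs}{\bs}$ asserted in the statement.

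For the first stage, cyclic invariance of the trace lets us rewrite~\eqref{eq:s-orthog-def} as
\begin{displaymath}
\tr\bigl(\cycle[\tilde]{s}{\bs}\,\cycle{s}{\bs}\realline{s}{\bs}\cycle{s}{\bs}\bigr)
=\scalar{\cycle[\tilde]{s}{\bs}}{\cycle{s}{\bs}\realline{s}{\bs}\cycle{s}{\bs}},
\end{displaymath}
so that s-orthogonality of $\cycle[\tilde]{s}{\bs}$ to $\cycle{s}{\bs}$ is the $\bs$-orthogonality in the sense of Definition~\ref{de:orthogonality} of $\cycle[\tilde]{s}{\bs}$ to the FSCc sandwich $\cycle{s}{\bs}\realline{s}{\bs}\cycle{s}{\bs}$, which is precisely the reflection of the real line in $\cycle{s}{\bs}$ and is itself a cycle. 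Feeding this cycle into Theorem~\ref{th:ghost1} produces an equivalent traditional $\sigma$-orthogonality against its ghost. A direct multiplication of the relevant $2\times 2$ matrices, combined with the normalisation prescribed in Theorem~\ref{th:ghost1}(ii) (same roots and the determinant rule $\det\hat C^{1}_{\sigma}=\det C^{\chi(\bs)}_{\sigma}$), identifies this ghost, up to a projective scalar, with $\cycle{\chi(\sigma)}{\bs}\realline{\chi(\sigma)}{\bs}\cycle{\chi(\sigma)}{\bs}$, which is the s-ghost in the statement.

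For~(i) I would expand the sandwich product and read off its $(k',l',n',m')$-entries, then plug them into the centre formula $(l'/k',-\chi(\sigma)n'/k')$ and compare with the $\bs$-focus formula~\eqref{eq:foci} of $\cycle{s}{\bs}$; the two expressions agree after cancellation, giving the coincidence of centre and focus. The corollary about lines follows from the classical fact that a straight line orthogonal in the traditional sense to a cycle passes through its centre, so any line s-orthogonal to $\cycle{s}{\bs}$ must pass through the $\chi(\sigma)$-centre of $\cycle[\tilde]{\bs}{\bs}$, which is the $\bs$-focus of $\cycle{s}{\bs}$. For~(ii) I would observe that a real root of $\cycle{s}{\bs}$ lies on the mirror $\cycle{\chi(\sigma)}{\bs}$ (since the underlying equation is insensitive to the parameter $s$) and is therefore a fixed point of the induced inversion; it thus lies on the reflected real line $\cycle[\tilde]{\bs}{\bs}$ as well, and a count of at most two intersection points of two cycles upgrades this inclusion to equality of real roots.

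The main obstacle is the parameter bookkeeping. Three independent binary choices intervene — $\sigma$ for the point space, $\bs$ for the cycle space, and $s=\pm 1$ for the FSCc normalisation — and they propagate differently through the two reductions. The crux is to verify that the exponent appearing in the s-ghost sandwich is precisely $\chi(\sigma)$ rather than $\chi(\bs)$ or $s$, and this identification drops out only after the determinant rule in Theorem~\ref{th:ghost1}(ii) is applied to the expanded matrix product. Once the exponent is correctly identified, parts~(i) and~(ii) reduce to mechanical FSCc algebra.
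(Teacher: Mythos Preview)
The paper does not prove this theorem: it is an outline, and results of this kind are delegated to brute-force computer algebra verifications in the companion paper~\cite{Kisil05a} (cf.\ the remark after Theorem~\ref{th:FSCc-intertwine}). So there is no ``paper's own proof'' to compare against in any substantive sense.

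Your strategy is sound and is in fact more conceptual than the CAS route. The cyclicity step
\[
\tr\bigl(\cycle{s}{\bs}\,\cycle[\tilde]{s}{\bs}\,\cycle{s}{\bs}\,\realline{s}{\bs}\bigr)
=\tr\bigl(\cycle[\tilde]{s}{\bs}\,(\cycle{s}{\bs}\,\realline{s}{\bs}\,\cycle{s}{\bs})\bigr)
=\scalar{\cycle[\tilde]{s}{\bs}}{\cycle{s}{\bs}\,\realline{s}{\bs}\,\cycle{s}{\bs}}
\]
is exactly the right reduction, turning s-orthogonality into $\bs$-orthogonality against the reflected real line, after which Theorem~\ref{th:ghost1} applies. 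Your argument for part~(ii) via fixed points of the inversion is clean and correct: real roots lie on both the real line and on the mirror $\cycle{\chi(\sigma)}{\bs}$ (the equation~\eqref{eq:cycle-eq} being independent of $s$ and $\bs$), hence are fixed by the reflection.

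Two caveats. First, the place you flag as ``parameter bookkeeping'' is genuinely the load-bearing computation, and your proposal stops just short of carrying it out: you must actually verify that the ghost (in the sense of Theorem~\ref{th:ghost1}) of the reflected real line $\cycle{s}{\bs}\,\realline{s}{\bs}\,\cycle{s}{\bs}$ coincides projectively with $\cycle{\chi(\sigma)}{\bs}\,\Space[\bs]{R}{\bs}\,\cycle{\chi(\sigma)}{\bs}$. This is a finite matrix calculation, but it is where the $\chi(\sigma)$ exponent is pinned down, and it should be written out rather than asserted. Second, a minor slip: in your identification you wrote $\realline{\chi(\sigma)}{\bs}$ for the middle factor, whereas the statement has $\Space[\bs]{R}{\bs}$; since the FSCc matrix of the real line is a scalar multiple of the identity, the discrepancy is projectively harmless, but you should match the paper's notation.
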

  Note the above intriguing interplay between cycle's centres
  and foci. Although s-orthogonality may look
  exotic it will naturally appear in the end of next Section again.

  Of course, it is possible to define another interesting higher order joint
  invariants of two or even more cycles.
  
  \subsection{Distance, length and perpendicularity}
  \label{sec:dist-lenght-perp}
  Geo\emph{metry} in the plain meaning of this word deals with \emph{distances}
  and \emph{lengths}. Can we obtain them from cycles?
  
\begin{figure}[htbp]
  \centering
  (a) \includegraphics[scale=.75]{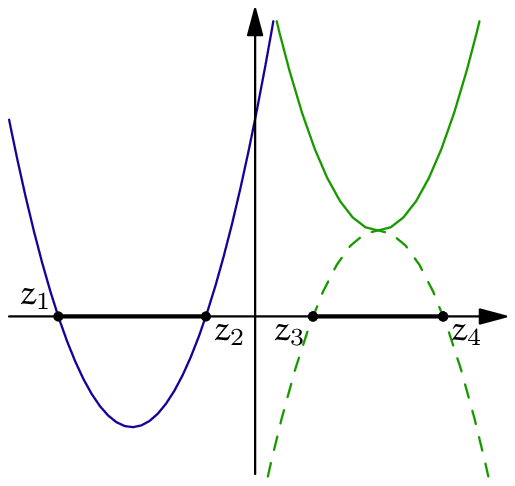}\hfill
  (b) \includegraphics[scale=.75]{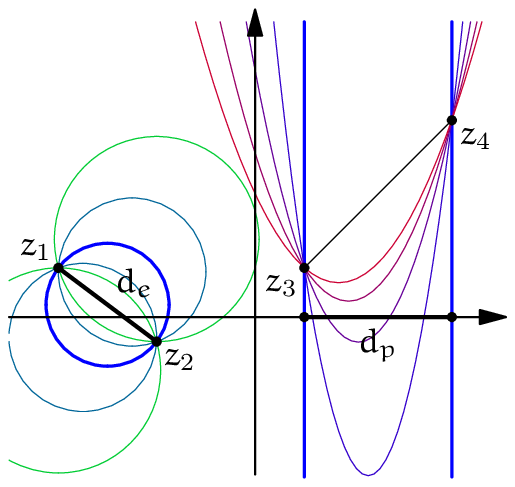}\hfill
  (c) \includegraphics[scale=.75]{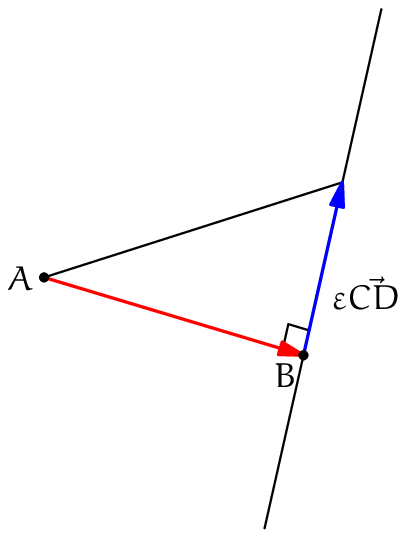}
  \caption[Radius and distance]{\small (a) The square of the parabolic
    diameter is the square of the distance between roots if they are
    real (\(z_1\) and \(z_2\)), otherwise the negative square of the
    distance between the adjoint roots (\(z_3\) and \(z_4\)).\\
    (b) Distance as extremum of diameters in elliptic (\(z_1\) and
    \(z_2\)) and parabolic (\(z_3\) and \(z_4\)) cases.\\
    (c) Perpendicular as the shortest route to a line.}
  \label{fig:distances}
\end{figure}

  We mentioned already that for circles normalised by the condition
  \(k=1\) the value \(\det
  \cycle{s}{\bs}=\scalar{\cycle{s}{\bs}}{\cycle{s}{\bs}}\) produces
  the square of the traditional circle radius. Thus we may keep it as the
  definition of the \emph{radius} for any cycle. But then we need to
  accept that in the parabolic case the radius is the (Euclidean) distance
  between (real) roots of the parabola, see
  Fig.~\ref{fig:distances}(a).

  Having radii of circles already defined we may use them for other
  measurements in several different ways. For example, the following
  variational definition may be used:

  \begin{defn}
    \label{de:distance}
    The \emph{distance} between two points is the extremum of
    diameters of all cycles passing through both points, see
    Fig.~\ref{fig:distances}(b).
  \end{defn}
  
  If \(\bs=\sigma\) this definition gives in all EPH cases the
  distance between endpoints of a vector \(z=u+\rmi v\) as follows:
  \begin{equation}
    \label{eq:eph-distance}
    d_{e,p,h}(u,v)^2=(u+\rmi v)(u-\rmi v)=u^2-\sigma  v^2.
  \end{equation}
  The parabolic distance \(d_p^2=u^2\), see
  Fig.~\ref{fig:distances}(b), algebraically sits between \(d_e\) and
  \(d_h\) according to the general principle~\eqref{eq:eph-class} and
  is widely accepted~\cite{Yaglom79}. However one may be unsatisfied
  by its degeneracy.

  An alternative measurement is motivated by the fact that a circle is
  the set of equidistant points from its centre. However the choice of
  ``centre'' is now rich: it may be either point from three
  centres~\eqref{eq:centres} or three foci~\eqref{eq:foci}.
  \begin{defn}
    \label{de:length}
    The \emph{length} of a directed interval \(\lvec{AB}\) is the radius
    of the cycle with its \emph{centre} (denoted by \(l_c(\lvec{AB})\))
    or \emph{focus} (denoted by \(l_f(\lvec{AB})\)) at the point \(A\)
    which passes through \(B\). 
  \end{defn}

  These definition is less common and have some unusual properties
  like non-symmetry: \(l_f(\lvec{AB})\neq l_f(\lvec{BA})\). However it
  comfortably fits the Erlangen program due to its
  \(\SL\)-\emph{conformal invariance}:

  \begin{thm}[\cite{Kisil05a}]
    Let \(l\) denote either the EPH distances~\eqref{eq:eph-distance}
    or any length from Defn.~\ref{de:length}. Then for
    fixed \(y\), \(y'\in\Space{R}{\sigma}\) the limit:
    \begin{displaymath}
      \lim_{t\rightarrow 0} \frac{l(g\cdot y, g\cdot(y+ty'))}{l(y,
        y+ty')}, \qquad
      \text{ where } g\in\SL, 
    \end{displaymath}
    exists and its value depends only from \(y\) and \(g\) and is
    independent from \(y'\).
  \end{thm}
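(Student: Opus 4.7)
My plan is to reduce the statement to a first-order computation at $y$. Since $\det g=1$, differentiating $g\cdot z=(az+b)/(cz+d)$ yields
$$g\cdot(y+h)-g\cdot y=\frac{h}{(cy+d)^{2}}+O(h^{2}),$$
valid in the arithmetic of $\Space{R}{\sigma}$ once $cy+d$ is invertible near $y$. Setting $h=ty'$, the image displacement is $ty'(cy+d)^{-2}$ plus second-order terms.

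The second ingredient is that each version of $l$---the distance~\eqref{eq:eph-distance} and the lengths $l_c$, $l_f$ of Definition~\ref{de:length}---is translation invariant in $\Space{R}{\sigma}$, so $l(p,p+h)=L(h)$ for some function $L\colon\Space{R}{\sigma}\to\Space{R}{}$, and each such $L$ is \emph{multiplicative}, $L(w_{1}w_{2})=L(w_{1})L(w_{2})$. For the distance this follows from $L(w)^{2}=w\bar{w}$ together with the fact that conjugation is a ring homomorphism in every EPH realisation. For $l_c$ and $l_f$ one reads an analogous quadratic identity off the FSCc matrix~\eqref{eq:FSCc-matrix} of the cycle having $A$ as its $\bs$-centre (respectively $\bs$-focus) and passing through $B$: normalising by $k$ and using $\det C_{\bs}^{s}$ for the squared radius gives a degree-two homogeneous expression in the components of $B-A$ that is multiplicative with respect to the $\Space{R}{\sigma}$-product.

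Combining the two ingredients, the ratio in the theorem becomes
$$\frac{L\bigl(ty'(cy+d)^{-2}+O(t^{2})\bigr)}{L(ty')},$$
so the homogeneity $L(\lambda h)=|\lambda|\,L(h)$ cancels $|t|$ from numerator and denominator, and continuity plus multiplicativity of $L$ forces the limit to equal $1/L(cy+d)^{2}$. The right-hand side manifestly depends only on $y$ and on the entries $c,d$ of $g$, and is independent of $y'$, which is exactly the statement of the theorem.

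The main obstacle I anticipate is the parabolic point space $\sigma=0$, where the form $L(w)^{2}=w\bar{w}$ is degenerate on the ideal of nilpotent directions, so the $O(t^{2})$ remainder can compete with the main term and the ratio may formally take the shape $0/0$. I would handle this by sharpening the error bound, showing that the remainder is genuinely of smaller order once $L(y')\neq 0$, and treating the degenerate nilpotent directions as a limit in which both sides vanish at the same rate. The asymmetry of $l_f$ causes no trouble, because the role of focus is played consistently by $y$ on the left-hand side and by $g\cdot y$ on the right.
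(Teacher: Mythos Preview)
The paper does not contain a proof of this theorem; it merely cites the companion paper~\cite{Kisil05a}, where the result is established by explicit case-by-case computation (largely computer-assisted) of the various lengths and distances.  So there is no in-text argument to compare against, and your proposal must be judged on its own merits.

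Your argument is correct for the EPH distances~\eqref{eq:eph-distance}: there \(d(p,q)^{2}=(p-q)\overline{(p-q)}\) genuinely depends only on the difference and is multiplicative, so the derivative formula \(g\cdot(y+ty')-g\cdot y = ty'(cy+d)^{-2}+O(t^{2})\) together with \(L(w_{1}w_{2})=L(w_{1})L(w_{2})\) yields the limit \(L((cy+d)^{-2})^{-1}\), independent of \(y'\).

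The gap is in the treatment of \(l_{c}\) and \(l_{f}\).  Your two structural claims---translation invariance \(l(p,p+h)=L(h)\) and multiplicativity of \(L\)---are both false in general.  A direct computation from~\eqref{eq:FSCc-matrix} shows that for \(l_{c}\) with \(\bs\)-centre one obtains
\[
l_{c}(\lvec{AB})^{2}= -(B_{1}-A_{1})^{2}+\bs^{-1}(B_{2}-A_{2})^{2}+(\sigma-\bs^{-1})\,B_{2}^{2},
\]
which depends on \(B_{2}\) itself, not only on \(B-A\), whenever \(\sigma\bs\neq 1\).  In particular \(l_{c}(\lvec{AA})=(\sigma-\bs^{-1})A_{2}^{2}\neq 0\) for \(A\) off the real axis, so your ratio does not even take the form \(L(\cdot)/L(\cdot)\), and the passage ``continuity plus multiplicativity of \(L\) forces the limit to equal \(1/L(cy+d)^{2}\)'' has no content here.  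For \(l_{f}\) the situation is worse: the very asymmetry \(l_{f}(\lvec{AB})\neq l_{f}(\lvec{BA})\) noted after Definition~\ref{de:length} already rules out any expression of the form \(L(B-A)\) with \(L\) multiplicative, since such an \(L\) would satisfy \(L(-w)=L(w)\).

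What actually happens in the mixed cases \(\sigma\bs\neq 1\) is that both numerator and denominator tend to the same nonzero value \(l(g\cdot y,g\cdot y)\) and \(l(y,y)\) respectively, and one must check that the \emph{ratio} of these (plus the first-order corrections in \(t\)) is independent of \(y'\).  This requires the explicit formulae for \(l_{c}\) and \(l_{f}\) rather than an abstract multiplicativity principle, which is why the cited proof proceeds by direct verification.
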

  
  We may return from distances to angles recalling that in the
  Euclidean space a perpendicular provides the shortest root from a
  point to a line, see Fig.~\ref{fig:distances}(c). 
  \begin{defn}
    \label{de:perpendicular}
    Let \(l\) be a length or distance.  We say that a vector \(\lvec{AB}\) is
    \emph{\(l\)-perpendicular} to a vector \(\lvec{CD}\) if function
    \(l(\lvec{AB}+\varepsilon \lvec{CD})\) of a variable \(\varepsilon\) has a
    local extremum at \(\varepsilon=0\). 
  \end{defn}
  A pleasant surprise is that \(l_f\)-perpendicularity obtained
  thought the length from focus (Defn.~\ref{de:length}) coincides with
  already defined in Section~\ref{sec:higher-order-joint}
  s-orthogonality as follows from
  Thm.~\ref{th:ghost2}(\ref{item:focal-centre-rel}). It is
  also possible~\cite{Kisil08a} to make \(\SL\) action isometric in
  all three cases. 

  All these study are waiting to be  generalised to high dimensions
  and \wiki{Clifford_algebra}{Clifford algebras} provide a suitable
  language for this~\cite{Kisil05a
  }. 

\section{Analytic Functions}
\label{sec:analytic-functions}

We saw in the previous section that an inspiring geometry of cycles
can be recovered from the properties of \(\SL\).
In this section we consider a realisation of the function theory within
Erlangen approach~\cite{Kisil97c,Kisil97a,Kisil01a,Kisil02c}.

\subsection{Wavelet Transform and Cauchy Kernel}
\label{sec:backgr-compl-analys}

Elements of \(\SL\) could be also represented by \(2\times 2\)-matrices
with complex entries such that:
\begin{displaymath}
  g= \matr{\alpha}{\Bb}{\beta}{\Ba},
  \qquad 
  g^{-1}= \matr{\Ba}{-\Bb}{-\beta}{\alpha}, 
  \qquad
  \modulus{\alpha}^2-\modulus{\beta}^2=1.
\end{displaymath}
This realisations of \(\SL\) (or rather \(SU(2,\Space{C}{})\)) is more
suitable for function theory in the unit disk. It is obtained from the
form, which we used before for the upper half-plane, by means of the
Cayley transform~\cite[\S~8.1]{Kisil05a}.

We may identify the unit disk \(\Space{D}{}\) with the homogeneous space
\(\SL/\Space{T}{}\) for the unit circle \(\Space{T}{}\) through
the important decomposition \(\SL\sim \Space{D}{}\times\Space{T}{}\)\
with \(K=\Space{T}{}\)---the only compact subgroup of \(\SL\):
\begin{eqnarray}
\label{eq:sl2-u-psi-coord}
  \matr{\alpha}{\bar{\beta}}{\beta}{\bar{\alpha}} 
  & =& \modulus{\alpha} 
  \matr{1}{\bar{\beta}\bar{\alpha}^{-1}}{{\beta}{\alpha}^{-1}}{1}
  \matr{ 
    \frac{{\alpha}}{ \modulus{\alpha} } }{0}{0}{\frac{\bar{\alpha}}{ 
      \modulus{\alpha} } }
  \nonumber \\
  &=& \frac{1}{\sqrt{1- \modulus{u}^2 }}
  \matr{1}{u}{\bar{u}}{1}, \nonumber 
  \matr{e^{i\omega}}{0}{0}{e^{-i\omega}}
  \end{eqnarray}
where
\begin{displaymath}
  \omega=\arg \alpha,\qquad 
  u=\bar{\beta}\bar{\alpha}^{-1},\qquad \modulus{u}<1.
\end{displaymath}
Each element \(g\in\SL\) acts by the linear-fractional transformation
(the M\"obius map) on \(\Space{D}{}\)\ 
and \(\Space{T}{}\)
\(\FSpace{H}{2}(\Space{T}{})\) as follows: 
\begin{equation}
  \label{eq:moebius-su}
  g^{-1}: z \mapsto \frac{\Ba z - \Bb}{\alpha-{\beta} z},
\qquad \textrm{ where } \quad
g^{-1}=\matr{\Ba}{-\Bb}{-\beta}{\alpha}.
\end{equation}
In the decomposition~\eqref{eq:sl2-u-psi-coord} the first matrix on
the right hand side acts by transformation~\eqref{eq:moebius} as an
orthogonal rotation of \(\Space{T}{}\) or \(\Space{D}{}\); and the
second one---by transitive family of maps of the unit disk onto
itself.

The standard linearisation procedure~\cite[\S~7.1]{Kirillov76} leads
from M\"obius transformations~\eqref{eq:moebius} to the unitary
representation \(\rho_1\) irreducible on the \emph{Hardy space}:
\begin{equation}
  \label{eq:rho-1-1}
  \rho_1(g): f(z) \mapsto
  \frac{1}{\alpha-{\beta}{z}} \,
  f\left(
    \frac{\Ba z - \Bb}{\alpha-{\beta} z} 
  \right)
\qquad  \textrm{ where } \quad 
g^{-1}=\matr{\Ba}{-\Bb}{-\beta}{\alpha}.
\end{equation}
M\"obius transformations provide a natural family of
intertwining operators for \(\rho_1\) coming from inner
automorphisms of \(\SL\) (will be used later). 
 
We choose~\cite{Kisil98a,Kisil01a} \(K\)-invariant function \(v_0(z)\equiv 1\) 
to be  a \emph{vacuum vector}.
Thus the associated \emph{coherent states}
\begin{displaymath}
  v(g,z)=\rho_1(g)v_0(z)= (u-z)^{-1}
\end{displaymath} 
are completely determined by the point on the unit disk \(
u=\bar{\beta}\bar{\alpha}^{-1}\). The family of coherent states considered
as a function of both \(u\) and \(z\) is obviously the \textit{Cauchy
  kernel}~\cite{Kisil97c}. The \emph{wavelet transform}~\cite{Kisil97c,Kisil98a}
\(\oper{W}:\FSpace{L}{2}(\Space{T}{})\rightarrow
\FSpace{H}{2}(\Space{D}{}): f(z)\mapsto
\oper{W}f(g)=\scalar{f}{v_g}\)\ is the \textit{Cauchy integral}:
\begin{equation}
  \label{eq:cauchy}
  \oper{W} f(u)=\frac{1}{2\pi i}\int_{\Space{T}{}}f(z)\frac{1}{u-z}\,dz.
\end{equation}

We start from the following observation reflected in the
almost any textbook on complex analysis:
\begin{prop}
\label{pr:function-theory}
\emph{Analytic function theory} in the unit disk \(\Space{D}{}\) is
a manifestation of 
the \textit{mock discrete series}
representation \(\rho_1\) of \(\SL\):
\begin{equation}
  \label{eq:rho-1}
  \rho_1(g): f(z) \mapsto
  \frac{1}{\alpha-{\beta}{z}} \,
  f\left(
    \frac{\Ba z - \Bb}{\alpha-{\beta} z} 
  \right), \quad \textup{ where }
  \matr{\Ba}{-\Bb}{-\beta}{\alpha}\in\SL.
\end{equation}
\end{prop}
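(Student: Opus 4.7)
The plan is to show that the three pillars of Hardy space theory on the disk---the Cauchy kernel, the Cauchy integral, and the Cauchy--Riemann equation---all emerge canonically from the representation \(\uir{1}\) via the coherent state / wavelet construction already sketched above, so that each of them is just a shadow of \(\uir{1}\).

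First, I would check that the vacuum \(v_0(z)\equiv 1\in\FSpace{L}{2}(\Space{T}{})\) is a \(K\)-eigenvector: for \(g\in K\) one has \(\alpha=e^{i\omega}\) and \(\beta=0\), and substitution into~\eqref{eq:rho-1} gives \(\uir{1}(g)v_0=e^{-i\omega}v_0\). Hence the coherent-state orbit \(v_g=\uir{1}(g)v_0\) descends (up to a unit scalar) to the homogeneous space \(\SL/K\cong\Space{D}{}\), and the decomposition~\eqref{eq:sl2-u-psi-coord} identifies it with the Cauchy kernel \(v(g,z)=(u-z)^{-1}\), \(u=\bar\beta\bar\alpha^{-1}\). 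Unfolding the \(\FSpace{L}{2}(\Space{T}{})\) inner product, the associated wavelet transform \(\oper{W}f(u)=\scalar{f}{v_g}\) then coincides literally with the Cauchy integral~\eqref{eq:cauchy}, so the very first object of complex analysis is reproduced by the coherent state construction for \(\uir{1}\).

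Next I would identify the image of \(\oper{W}\) with \(\FSpace{H}{2}(\Space{D}{})\). By general coherent-state theory \(\oper{W}\) is, up to a normalisation constant, an isometric intertwiner from \(\FSpace{L}{2}(\Space{T}{})\) onto an \(\SL\)-invariant closed subspace of functions on \(\Space{D}{}\) carrying an equivalent copy of \(\uir{1}\). A residue calculation on the Fourier basis \(\{z^n\}_{n\in\Space{Z}{}}\) yields \(\oper{W}z^n=u^n\) for \(n\geq 0\) and \(\oper{W}z^n=0\) for \(n<0\), so the image is exactly \(\FSpace{H}{2}(\Space{D}{})\). Equivalently, the complexified basis of \(\algebra{sl}_2\otimes\Space{C}{}\) adapted to \(K\) contains a raising element whose image under \(d\uir{1}\) annihilates the vacuum \(v_0\); by intertwining this operator annihilates the whole of \(\oper{W}(\FSpace{L}{2}(\Space{T}{}))\), and after gauging away the automorphy factor \((\alpha-\beta z)^{-1}\) it becomes precisely the Cauchy--Riemann operator \(\bar\partial\), so the image of \(\oper{W}\) consists of holomorphic \(\FSpace{L}{2}\) functions on \(\Space{D}{}\).

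The remaining classical objects of disk function theory then fall out of the \(\SL\)-symmetry for free: the Taylor expansion is the \(K\)-weight decomposition of \(\FSpace{H}{2}(\Space{D}{})\), the Cauchy formula is reconstruction against the reproducing kernel \(v_g\), and the boundary behaviour is controlled by~\eqref{eq:moebius-su}. The delicate step is the image identification: the cleanest route is the residue calculation on \(\{z^n\}\), which is just the familiar classical fact that the Cauchy integral kills the negative Fourier modes of a boundary \(\FSpace{L}{2}\) function---so that the whole proposition is, in effect, this classical identity systematically repackaged as a statement about \(\uir{1}\), confirming that analytic function theory in \(\Space{D}{}\) is indeed a manifestation of the mock discrete series.
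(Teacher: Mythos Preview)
Your proposal is correct and follows essentially the same path as the paper: the paper treats the Proposition as a programmatic observation whose substance is supplied by the surrounding subsections---coherent states of \(\uir{1}\) with vacuum \(v_0\equiv 1\) give the Cauchy kernel, the wavelet transform is the Cauchy integral, the Cauchy--Riemann operator arises from the derived representation, and the Taylor expansion is the \(K\)-eigenfunction decomposition---exactly the ingredients you assemble. The only cosmetic difference is that the paper packages the Cauchy--Riemann operator via the general Dirac operator formula \(D_\tau=\sum_j \rho(Y_j)\otimes c(Y_j)\otimes 1\) specialised to \(D=\rho(A)+i\rho(B)\), rather than speaking of a raising element annihilating the vacuum, and it does not spell out the residue computation on \(\{z^n\}\) that you use for the image identification; but these are the same mechanisms in slightly different language.
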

Other classical objects of complex analysis (the Cauchy-Riemann
equation, the Taylor series, the Bergman space, etc.)  can be also
obtained~\cite{Kisil97c,Kisil01a} from representation \(\rho_1\) as
shown below.

\subsection{The Dirac (Cauchy-Riemann) and Laplace Operators}
Consideration of Lie groups is hardly possible without consideration of 
their Lie algebras, which are naturally represented by left and right
invariant vectors fields on groups. On a homogeneous space \(\Omega=G/H\) we
have also defined a left action of \(G\) and can be interested in left
invariant vector fields (first order differential operators). Due to the 
irreducibility of \( \FSpace{F}{2}( \Omega )\) under left action of \(G\) every
such vector field \(D\) restricted to \( \FSpace{F}{2}( \Omega )\) is a scalar
multiplier of identity \(D|_{\FSpace{F}{2}( \Omega )}=cI\). We are
in particular interested in the case \(c=0\).
\begin{defn} \cite{AtiyahSchmid80,KnappWallach76}
A \(G\)-invariant first order differential operator 
\begin{displaymath}
D_\tau: \FSpace{C}{\infty}(\Omega, \mathcal{S} \otimes V_\tau ) 
\rightarrow
\FSpace{C}{\infty}(\Omega, \mathcal{S} \otimes V_\tau )
\end{displaymath}
such that \(\oper{W}(\FSpace{F}{2}(X))\subset \object{ker} D_\tau\) is called 
\emph{(Cauchy-Riemann-)Dirac operator} on \(\Omega=G/H\)
associated with an irreducible representation \( \tau \) of \(H\) in a space 
\(V_\tau\) and a spinor bundle \(\mathcal{S}\).  
\end{defn}
The Dirac operator is explicitly defined by the 
formula~\cite[(3.1)]{KnappWallach76}:
\begin{equation} \label{eq:dirac-def}
D_\tau= \sum_{j=1}^n \rho(Y_j) \otimes c(Y_j) \otimes 1,
\end{equation}
where \(Y_j\) is an orthonormal basis of
\(\algebra{p}=\algebra{h}^\perp\)---the orthogonal completion of the Lie
algebra \(\algebra{h}\) of the subgroup \(H\) in the Lie algebra \(\algebra{g}\)
of \(G\); \(\rho(Y_j)\) is the infinitesimal generator of the right action of
\(G\) on \(\Omega\); \(c(Y_j)\) is Clifford multiplication by \(Y_i \in
\algebra{p}\) on the Clifford module \(\mathcal{S}\).  We also define 
an invariant Laplacian by the formula
\begin{equation} \label{eq:lap-def}
\Delta_\tau= \sum_{j=1}^n \rho(Y_j)^2 \otimes \epsilon_j \otimes 1,
\end{equation}
where \(\epsilon_j = c(Y_j)^2\) is \(+1\) or \(-1\).
\begin{prop}
Let all commutators of vectors of \( \algebra{h}^\perp \) belong to \(
\algebra{h}\), i.e.
\([\algebra{h}^\perp,\algebra{h}^\perp]\subset\algebra{h}\).  Let also \(f_0\)
be an eigenfunction for all vectors of \( \algebra{h} \) with eigenvalue \(0\)
and let also \(\oper{W}f_0\) be a null solution to the Dirac operator \(D\).
Then \(\Delta f(x)=0\) for all \(f(x)\in \FSpace{F}{2}(\Omega)\).
\end{prop}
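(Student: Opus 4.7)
The strategy is to verify the operator identity $\Delta_\tau = D_\tau^2$ on the image of $\oper{W}$ (up to terms that annihilate $\oper{W}f_0$), conclude $\Delta_\tau \oper{W}f_0 = 0$ from $D_\tau \oper{W}f_0 = 0$, and then propagate this vanishing to all of $\FSpace{F}{2}(\Omega)$ by $G$-equivariance.

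First I would square the Dirac operator \eqref{eq:dirac-def}. Using the Clifford anti-commutation relation $c(Y_j)c(Y_k)+c(Y_k)c(Y_j)=2\epsilon_j\delta_{jk}$ for an orthonormal basis of $\algebra{h}^\perp$, the diagonal part of $D_\tau^2$ collects into $\sum_j \rho(Y_j)^2\otimes\epsilon_j\otimes 1 = \Delta_\tau$, while the off-diagonal part becomes
\begin{equation*}
\tfrac12\sum_{j\ne k}[\rho(Y_j),\rho(Y_k)]\otimes c(Y_j)c(Y_k)\otimes 1 = \tfrac12\sum_{j\ne k}\rho([Y_j,Y_k])\otimes c(Y_j)c(Y_k)\otimes 1,
\end{equation*}
since $\rho$ is a Lie-algebra homomorphism. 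The hypothesis $[\algebra{h}^\perp,\algebra{h}^\perp]\subset\algebra{h}$ now forces each $[Y_j,Y_k]$ to lie in $\algebra{h}$, so
\begin{equation*}
D_\tau^2 = \Delta_\tau + \sum_{j<k}\rho(Z_{jk})\otimes M_{jk}\otimes 1, \qquad Z_{jk}\in\algebra{h}.
\end{equation*}

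Next I would exploit the assumption on $f_0$. By the covariance of the wavelet transform $\oper{W}f_0(g)=\scalar{f_0}{\uir{1}(g)v_0}$, the condition $\rho(Z)f_0=0$ for all $Z\in\algebra{h}$ forces the right-invariant vector field $\rho(Z)$ on $G$ to annihilate $\oper{W}f_0$; this is precisely the statement that $\oper{W}f_0$ descends to $\Omega=G/H$. Hence every correction term $\rho(Z_{jk})$ with $Z_{jk}\in\algebra{h}$ kills $\oper{W}f_0$, leaving
\begin{equation*}
\Delta_\tau \oper{W}f_0 = D_\tau^2 \oper{W}f_0 = D_\tau(D_\tau\oper{W}f_0) = D_\tau(0) = 0,
\end{equation*}
where the final equality uses the hypothesis $\oper{W}f_0\in\ker D_\tau$. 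To extend to arbitrary $f\in\FSpace{F}{2}(\Omega)$, I would invoke $G$-equivariance: $\oper{W}$ intertwines $\rho$ with $\uir{1}$, and both $D_\tau$ and $\Delta_\tau$ commute with $\uir{1}(g)$ since they are built from $G$-invariant data. Consequently $\Delta_\tau \uir{1}(g)\oper{W}f_0 = \uir{1}(g)\Delta_\tau\oper{W}f_0 = 0$ for every $g\in G$. If $f_0$ is cyclic for $\rho$, as is standard for a coherent-state vacuum, the $G$-translates of $\oper{W}f_0$ span a dense subspace of $\FSpace{F}{2}(\Omega)$, and continuity of $\Delta_\tau$ finishes the proof.

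The main obstacle is the passage from the algebraic condition $\rho(Z)f_0=0$ for $Z\in\algebra{h}$ to the differential condition $\rho(Z_{jk})\oper{W}f_0=0$ for the bracketed elements $Z_{jk}\in\algebra{h}$ appearing in the correction. One must carefully track left/right-invariance conventions in $\rho(Y_j)$ of \eqref{eq:dirac-def} and the twisting by $V_\tau$ to confirm that the $\algebra{h}$-annihilation of $f_0$ really transports to annihilation of $\oper{W}f_0$; this is exactly the mechanism that makes $\oper{W}f_0$ live on the homogeneous space $\Omega$ in the first place, and once this bookkeeping is secured, the remaining steps are the purely formal Clifford algebra computation and a standard density argument.
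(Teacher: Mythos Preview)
Your proposal is correct and follows essentially the same route as the paper: both arguments hinge on the identity $D^2=\Delta+\sum_{i\ne j}\rho([Y_i,Y_j])\otimes c(Y_i)c(Y_j)\otimes 1$, kill the commutator correction via $[Y_i,Y_j]\in\algebra{h}$ and the eigenvalue-zero hypothesis on $f_0$, and then propagate $\Delta\oper{W}f_0=0$ to all of $\FSpace{F}{2}(\Omega)$ by $G$-invariance of $\Delta$ together with the fact that translates of $\oper{W}f_0$ generate the space. Your write-up is somewhat more explicit about the Clifford anticommutation bookkeeping and the left/right-invariance issue in transporting the $\algebra{h}$-annihilation from $f_0$ to $\oper{W}f_0$, which the paper simply asserts.
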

\begin{proof}
Because \(\Delta\) is a linear operator and \( \FSpace{F}{2}(\Omega)\) is 
generated by \(\pi_0(s(a))\oper{W} f_0\) it is enough to check that \(
\Delta \pi_0(s(a))\oper{W} f_0=0 \). Because \(\Delta\)  and \(\pi_0\) commute 
it is enough to check that \(\Delta \oper{W} f_0=0\). Now we observe that
\begin{displaymath}
\Delta = D^2 - \sum_{i,j} \rho([Y_i,Y_j]) \otimes c(Y_i)c(Y_j) \otimes 1.
\end{displaymath}
Thus the desired assertion is follows from two identities
\(\rho([Y_i,Y_j])\oper{W}f_0=0\) for \([Y_i,Y_j]\in H\) and  \(D
\oper{W}f_0=0\).
\end{proof}
\begin{example}
Let \(G=\SL\) and \(H\) be its one-dimensional compact subgroup \(K\) generated by 
an element \(Z \in \algebra{sl}(2, \Space{R}{})\). Then 
\(\algebra{h}^\perp\) is spanned by two vectors \(Y_1=A\) and \(Y_2=B\). In such 
a situation we can use \( \Space{C}{} \) instead of the Clifford algebra. Then formula~\eqref{eq:dirac-def} takes a simple 
form \(D=r(A+iB)\). Infinitesimal action of this operator in the upper-half 
plane follows from calculation in~\cite[VI.5(8), IX.5(3)]{Lang85}, it is 
\([D_{ \Space{H}{} } f] (z)= -2i y \frac{ \partial f(z)}{ \partial \bar{z} }
\), \(z=x+iy\). Making the Caley transform we can find its action in the
unit disk \(D_{ \Space{D}{} } \): again the Cauchy-Riemann operator \( \frac{ 
\partial }{ \partial \bar{z} } \) is its principal component. 
We calculate  \(D_{ \Space{H}{} }\) explicitly now to stress the 
similarity with \(\Space{R}{1,1}\) case. 

For the upper half plane \(\Space{H}{}\) we have following formulas:
\begin{eqnarray*}
s&:&\Space{H}{} \rightarrow \SL: z=x+iy \mapsto 
g=\matr{y^{1/2}}{xy^{-1/2}}{0}{y^{-1/2}}; \nonumber \\
s^{-1}&:& \SL \rightarrow \Space{H}{}: \matr{a}{b}{c}{d} \mapsto z= 
\frac{ai+b}{ci+d};  \nonumber \\
\rho(g)&:& \Space{H}{} \rightarrow\Space{H}{} :z \mapsto s^{-1}( s(z) * g) 
\\
&& \qquad \qquad \qquad  =s^{-1}\matr{ay^{-1/2}+cxy^{-1/2}}{ 
by^{1/2}+dxy^{-1/2}}{cy^{-1/2}}{dy^{-1/2}}\\
&& \qquad \qquad \qquad =\frac{(yb+xd)+i(ay+cx)}{ci+d} 
\nonumber
\end{eqnarray*}
Thus the right action of \(\SL\) on \(\Space{H}{}\) is given by the formula
\begin{displaymath}
\rho(g)z=\frac{(yb+xd)+i(ay+cx)}{ci+d}= x+y \frac{bd+ac}{c^2+d^2}+iy 
\frac{1}{c^2+d^2}.
\end{displaymath}
For \(A\) and \(B\) in \(\algebra{sl}(2, \Space{R}{} )\) we have:
\begin{displaymath}
\rho(e^{At}) z = x+iy e^{2t}, \qquad \rho(e^{Bt}) z = x 
+y\frac{e^{2t}-e^{-2t}}{e^{2t}+e^{-2t}}+iy\frac{4}{e^{2t}+e^{-
2t}}.
\end{displaymath}
Thus
\begin{eqnarray*}
[\rho(A) f](z) & = & \frac{ \partial f (\rho(e^{At}) z )}{ \partial t }
|_{t=0} = 2y \partial_2 f(z), \\ {}
[\rho(B) f](z) & = & \frac{ \partial f (\rho(e^{Bt}) z )}{ \partial t } 
|_{t=0} = 2y \partial_1 f(z),
\end{eqnarray*}
where \(\partial_1\) and \(\partial_2\) are derivatives of \(f(z)\) with respect 
to real and imaginary party of \(z\) respectively. Thus we get 
\begin{displaymath}
D_{ \Space{H}{} }= i\rho(A) + \rho( B) = 2y i\partial_2 + 2y \partial_1=
2y \frac{ \partial }{ \partial \bar{z} }
\end{displaymath}
as was expected.
\end{example}

\subsection{The Taylor expansion}
For any decomposition \(f_a(x)=\sum_\alpha  \psi_\alpha(x) V_\alpha(a)\) 
of the coherent states \(f_a(x)\) by means of functions \(V_\alpha(a)\)
(where the sum can become eventually an integral) we have the
\emph{Taylor expansion} 
\begin{eqnarray} 
\widehat{f}(a) & = & \int_X f(x) \bar{f}_a(x)\, dx= \int_X f(x) \sum_\alpha 
\bar{\psi}_\alpha(x)\bar{V}_\alpha(a)\, dx  \nonumber \\
 & = &  \sum_\alpha 
\int_X f(x)\bar{\psi}_\alpha(x)\, dx \bar{V}_\alpha(a) \nonumber \\
 & = & \sum_{\alpha}^{\infty} \bar{V}_\alpha(a) f_\alpha,\label{eq:taylor}
\end{eqnarray}
where \(f_\alpha=\int_X f(x)\bar{\psi}_\alpha(x)\, dx\).
However to be useful within the presented scheme such a decomposition 
should be connected with the structures of \(G\), \(H\), and the representation 
\(\pi_0\). We will use a decomposition of \(f_a(x)\) by the eigenfunctions of 
the operators \(\pi_0(h)\), \(h\in \algebra{h}\).
\begin{defn}
 Let \(\FSpace{F}{2}=\int_{A} \FSpace{H}{\alpha}\,d\alpha\) be a spectral 
decomposition with respect to the operators \(\pi_0(h)\), \(h\in \algebra{h}\).
Then the decomposition
\begin{equation} \label{eq:spec-c}
 f_a(x)= \int_{A} V_\alpha(a) f_\alpha(x)\, d\alpha,
\end{equation}
where \(f_\alpha(x)\in \FSpace{H}{\alpha}\) and \(V_\alpha(a): 
\FSpace{H}{\alpha} \rightarrow \FSpace{H}{\alpha}\) is called the Taylor 
decomposition of the Cauchy kernel \(f_a(x)\).
\end{defn}
Note that the Dirac operator \(D\) is defined in the terms of left invariant 
shifts and therefor commutes with all \(\pi_0(h)\). Thus it also has a 
spectral decomposition over spectral subspaces of \(\pi_0(h)\):
\begin{equation} \label{eq:spec-d}
 D= \int_{A} D_\delta \, d\delta.
\end{equation}
We have obvious property
\begin{prop} \label{pr:cauchy-dirac}
If spectral measures \(d\alpha\) and \(d\delta\) 
from~\eqref{eq:spec-c} and~\eqref{eq:spec-d} have disjoint supports then 
the image of the Cauchy integral belongs to the kernel of the Dirac 
operator.
\end{prop}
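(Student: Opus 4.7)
The plan is to combine the Taylor decomposition of the Cauchy kernel with the spectral decomposition of the Dirac operator, and to exploit the fact that $D$ commutes with all $\pi_0(h)$, $h\in\algebra{h}$, so that each spectral component $D_\delta$ preserves the spectral subspace $\FSpace{H}{\delta}$ and annihilates all the others.

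First, I would take an arbitrary $f\in\FSpace{F}{2}(X)$ and write its image under the Cauchy integral, $\widehat{f}(a)=\int_X f(x)\bar{f}_a(x)\,dx$, by substituting the Taylor decomposition \eqref{eq:spec-c} of the Cauchy kernel $f_a(x)$. After a Fubini interchange this yields exactly the expansion~\eqref{eq:taylor} in the form
\begin{displaymath}
\widehat{f}(a)=\int_{A}\bar{V}_\alpha(a)\,f_\alpha\,d\alpha,\qquad f_\alpha=\int_X f(x)\bar{f}_\alpha(x)\,dx,
\end{displaymath}
so the Cauchy image is supported, as a function of $a$, inside those spectral subspaces $\FSpace{H}{\alpha}$ indexed by the support of $d\alpha$.

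Next I would apply $D$ using~\eqref{eq:spec-d}. The crucial observation, already noted just before the proposition, is that $D$ commutes with every $\pi_0(h)$ for $h\in\algebra{h}$; consequently each spectral piece $D_\delta$ is a map $\FSpace{H}{\delta}\to\FSpace{H}{\delta}$ which acts as zero on $\FSpace{H}{\alpha}$ whenever $\alpha\neq\delta$. Since $\bar{V}_\alpha(a)f_\alpha$ lies in $\FSpace{H}{\alpha}$, a second Fubini interchange gives
\begin{displaymath}
D\widehat{f}(a)=\int_{A}\!\!\int_{A} D_\delta\bigl(\bar{V}_\alpha(a)f_\alpha\bigr)\,d\alpha\,d\delta,
\end{displaymath}
and the inner integrand is supported on the diagonal $\alpha=\delta$. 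Under the disjoint-support hypothesis, the product measure $d\alpha\otimes d\delta$ assigns zero mass to this diagonal, so the integral vanishes and $\widehat{f}\in\ker D$.

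The substantive step is the orthogonality of the spectral pieces: verifying that $D_\delta$ genuinely annihilates $\FSpace{H}{\alpha}$ for $\alpha\neq\delta$. This is immediate from the direct-integral structure once one knows $[D,\pi_0(h)]=0$, but it is the place where the proof really uses the standing assumption that $D$ is $G$-invariant (so built from left-invariant vector fields commuting with the right action generating the spectral decomposition). The rest is a routine manipulation with Fubini; the only mild care needed is to justify the interchanges on $\FSpace{F}{2}$, which follows from the unitarity of $\oper{W}$ and the fact that the Taylor and spectral integrals converge in $\FSpace{L}{2}$.
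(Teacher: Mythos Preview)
The paper does not actually prove this proposition: it introduces it with the phrase ``We have obvious property'' and gives no argument at all. Your proposal is precisely the natural unpacking of why the statement is ``obvious'' once the two spectral decompositions~\eqref{eq:spec-c} and~\eqref{eq:spec-d} are in hand, and it is correct in spirit and in its main steps. There is nothing to compare against on the paper's side; your write-up simply supplies the details the author omitted.

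One small point worth tightening: you assert that ``$\bar{V}_\alpha(a)f_\alpha$ lies in $\FSpace{H}{\alpha}$'' as a function of $a$. In the paper's set-up $V_\alpha(a)$ is declared to be an operator $\FSpace{H}{\alpha}\to\FSpace{H}{\alpha}$, and the relevant fact is that the $a$-dependence of each Taylor component sits in the $\alpha$-spectral subspace of the \emph{target} space $\FSpace{F}{2}(\Omega)$, where $D$ and $\pi_0(h)$ act. This follows because the wavelet transform intertwines the $\pi_0(h)$-actions on source and target, so the Taylor decomposition on the $X$-side is carried to the spectral decomposition on the $\Omega$-side. Making that intertwining step explicit would remove the only place where your argument leans on an unstated identification; otherwise the proof is exactly what the author had in mind.
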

For discrete series representation functions \(f_\alpha(x)\) can be 
found in \(\FSpace{F}{2}\) (as in Example~\ref{ex:taylor-a}), for the 
principal series representation this is not the case. To overcome confusion
one can think about the Fourier transform on the real line. It can be 
regarded as a continuous decomposition of a function \(f(x)\in 
\FSpace{L}{2}(\Space{R}{})\) over a set of harmonics \(e^{i\xi x}\) neither of
those belongs to \(\FSpace{L}{2}(\Space{R}{})\). This has a lot of common 
with the Example~3.10(b) in \cite{Kisil97c}.
\begin{example} \label{ex:taylor-a}
Let \(G=\SL\) and \(H=K\) be its maximal compact subgroup and
\(\pi_1\) defined in~\eqref{eq:rho-1-1}.  \(H\) acts on \(\Space{T}{}\) by
rotations.  It is one dimensional and eigenfunctions of its generator \(Z\)
are parametrized by integers (due to compactness of \(K\)).  Moreover, on the
irreducible Hardy space these are positive integers \(n=1,2,3\ldots\) and
corresponding eigenfunctions are \(f_n(\phi)=e^{i(n-1)\phi}\). Negative 
integers span the space of anti-holomorphic function and the splitting 
reflects the existence of analytic structure given by the Cauchy-Riemann 
equation. The decomposition of coherent states \(f_a(\phi)\) by means of this
functions is well known:
\begin{displaymath}
f_a(\phi)= \frac{ \sqrt[]{1- \modulus{a}^2 }}{ \bar{a}e^{i\phi}-1}= 
\sum_{n=1}^\infty \sqrt[]{1- \modulus{a}^2 }\bar{a}^{n-1} e^{i(n-1)\phi}=
\sum_{n=1}^\infty V_n(a)f_n(\phi),
\end{displaymath}
where \(V_n(a)=\sqrt[]{1- \modulus{a}^2 }\bar{a}^{n-1} \). This is the 
classical Taylor expansion up to multipliers coming from the invariant 
measure.
\end{example}

\section{Functional Calculus}
\label{sec:functional-calculus}

United in the trinity functional calculus, spectrum, and spectral
mapping theorem play the exceptional r\^ole in functional
analysis and could not be substituted by anything else. 
All traditional  definitions of functional calculus  are covered by the
following rigid template based on \emph{algebra homomorphism} property:
\begin{defn}
\label{de:calculus-old}
 An \emph{functional calculus} for an element
 \(a\in\algebra{A}\)\ is a continuous 
linear mapping
\(\Phi: \mathcal{ A}\rightarrow \algebra{A}\)\ such that
\begin{enumerate} 
\item 
 \(\Phi\)\ is a unital \emph{algebra homomorphism}
 \begin{displaymath}
   \Phi(f \cdot g)=\Phi(f) \cdot \Phi (g).
\end{displaymath}
\item 
 There is an initialisation condition: \(\Phi[v_0]=a\)\ for
 for a fixed function \(v_0\), e.g. \(v_0(z)=z\). 
\end{enumerate}
\end{defn}

Most typical definition of the spectrum is seemingly independent and 
uses the important notion of
resolvent: 
\begin{defn}
  \label{de:spectrum}
  A \emph{resolvent} of element \(a\in\algebra{A}\)\ is the function
  \(R(\lambda)=(a-\lambda e)^{-1}\), which is the image under
  \(\Phi\)\ of the Cauchy kernel \((z-\lambda)^{-1}\).

  A  \emph{spectrum} of \(a\in\algebra{A}\)\ is the set \(\spec a\)\ of
  singular points of its resolvent \(R(\lambda)\).
\end{defn}
Then the following important theorem links spectrum and functional calculus
together. 
\begin{thm}[Spectral Mapping]
  \label{th:spectral-mapping}
  For a function \(f\) suitable for the   functional calculus:
   \begin{equation}
     \label{eq:spectral-mapping}
     f(\spec a)=\spec  f(a).
   \end{equation}
\end{thm}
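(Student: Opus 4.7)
The plan is to establish the two inclusions $f(\spec a)\subseteq \spec f(a)$ and $\spec f(a)\subseteq f(\spec a)$ separately, using the algebra homomorphism property of $\Phi$ together with two elementary analytic manipulations of the symbol functions: the factorisation $f(z)-f(\lambda_0)=(z-\lambda_0)g(z)$ and the reciprocal $1/(f(z)-\mu_0)$ when the denominator is non-vanishing. Throughout one should regard a point $\lambda_0\in\Space{C}{}$ as lying in $\spec a$ precisely when $a-\lambda_0 e$ fails to have a two-sided inverse (so that the resolvent $R(\lambda)=\Phi((z-\lambda)^{-1})$ is singular there).

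\textbf{Inclusion} $f(\spec a)\subseteq \spec f(a)$. Fix $\lambda_0\in\spec a$ and suppose for contradiction that $\mu_0:=f(\lambda_0)\notin\spec f(a)$, so that $T:=(f(a)-\mu_0 e)^{-1}$ exists in $\algebra{A}$. Writing $g(z)=(f(z)-f(\lambda_0))/(z-\lambda_0)$, which lies in $\mathcal{A}$ whenever $f$ does, one has the symbol identity $f(z)-\mu_0=(z-\lambda_0)g(z)=g(z)(z-\lambda_0)$. Applying $\Phi$ and using the homomorphism property together with $\Phi[v_0]=a$ yields
\begin{equation*}
  f(a)-\mu_0 e=(a-\lambda_0 e)\,\Phi(g)=\Phi(g)\,(a-\lambda_0 e).
\end{equation*}
Multiplying both factorisations by $T$ on the appropriate side produces a two-sided inverse $T\Phi(g)=\Phi(g)T$ for $a-\lambda_0 e$, contradicting $\lambda_0\in\spec a$.

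\textbf{Inclusion} $\spec f(a)\subseteq f(\spec a)$. Take $\mu_0\notin f(\spec a)$. Then $f(z)-\mu_0$ does not vanish at any point of $\spec a$, so the reciprocal $h(z)=1/(f(z)-\mu_0)$ is analytic on a neighbourhood of $\spec a$ and therefore belongs to $\mathcal{A}$. The homomorphism property gives
\begin{equation*}
  \Phi(h)\cdot(f(a)-\mu_0 e)=\Phi\!\left(h(z)\,(f(z)-\mu_0)\right)=\Phi(1)=e,
\end{equation*}
and symmetrically on the other side, so $\Phi(h)=(f(a)-\mu_0 e)^{-1}$ and hence $\mu_0\notin\spec f(a)$.

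The principal obstacle is not the algebra but the bookkeeping around the admissible function class $\mathcal{A}$: one must verify that the auxiliary symbols $g$ and $h$ above really live in the domain of $\Phi$, which requires that $\mathcal{A}$ be closed under division by a linear factor at an interior point and under taking reciprocals of non-vanishing elements. For the classical holomorphic functional calculus on a Banach algebra these closure properties are standard consequences of analyticity on a neighbourhood of $\spec a$. In the broader Erlangen-style setting of \S\ref{sec:functional-calculus}, where $\spec a$ may carry jet structure, one must additionally check that "singular point of the resolvent" coincides with "non-invertibility of $a-\lambda_0 e$" in the appropriate sense; granting this, the two paragraphs above deliver the theorem.
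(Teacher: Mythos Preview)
The paper does not actually prove Theorem~\ref{th:spectral-mapping}: it is quoted there as the classical spectral mapping theorem, serving only to introduce the standard triad (functional calculus, spectrum, spectral mapping) before the paper criticises that framework and replaces it with the covariant one of Definition~\ref{de:functional-calculus-new}. So there is no ``paper's own proof'' to compare against.

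Your argument is the standard textbook proof for the holomorphic functional calculus in a Banach algebra and is correct in that setting; the two auxiliary-symbol constructions (the divided difference $g$ and the reciprocal $h$) are exactly the classical ones, and the closure properties you flag are indeed the crux. One small remark: your final paragraph hedges about the jet-valued spectrum of \S\ref{sec:jet-bundl-prol-1}, but that is a different theorem (the unnumbered spectral mapping theorem in \S4.5, with the modified action~\eqref{eq:phi-star-action}), not Theorem~\ref{th:spectral-mapping}. For the classical statement you were asked about, the hedge is unnecessary---Definition~\ref{de:spectrum} is precisely non-invertibility of $a-\lambda e$, so your two inclusions go through cleanly.
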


However the power of the classic spectral theory rapidly decreases if
we move beyond the study of one normal operator (e.g. for
quasinilpotent ones) and is virtually nil if we consider several
non-commuting ones.
Sometimes these severe limitations are seen to be irresistible and
alternative constructions, i.e. model theory~\cite{Nikolskii86}, were
developed.

Yet the spectral theory can be revived from a fresh start. While three
compon\-ents---functional calculus, spectrum, and spectral mapping
theorem---are highly interdependent in various ways 
we will nevertheless arrange them as follows: 

\begin{enumerate}
\item Functional  calculus is an \emph{original} notion defined in
  some independent terms;
\item Spectrum (or spectral decomposition) is derived from previously
  defined functional calculus as its \emph{support} (in some
  appropriate sense);
\item Spectral mapping theorem then should drop out naturally in the
  form~\eqref{eq:spectral-mapping} or some its variation.
\end{enumerate}

Thus the entire scheme depends from the notion of the functional
calculus and our ability to escape limitations of
Definition~\ref{de:calculus-old}.  The first known to the present
author definition of functional calculus not linked to algebra
homomorphism property was the Weyl functional calculus defined by an
integral formula~\cite{Anderson69}. Then its intertwining property
with affine transformations of Euclidean space was proved as a
theorem. However it seems to be the only ``non-homomorphism'' calculus
for decades.

The different approach to whole range of calculi was given
in~\cite{Kisil95i} and developed in~\cite{Kisil98a} in terms of
\emph{intertwining operators} for group representations. It was
initially targeted for several non-commuting operators because no
non-trivial algebra homomorphism with a commutative algebra of
function is possible in this case.  However it emerged later that the
new definition is a useful replacement for classical one across all
range of problems.

In the present note we will support the last claim by consideration of
the simple known problem: characterisation a \(n \times n\)\ matrix up
to similarity. Even that ``freshman'' question could be only sorted
out by the classical spectral theory for a small set of diagonalisable
matrices. Our solution in terms of new spectrum will be full and thus
unavoidably coincides with one given by the Jordan normal form of
matrix. Other more difficult questions are the subject of ongoing
research.

\subsection{Another Approach to Analytic Functional Calculus} 

Anything called ``\emph{functional} calculus'' uses properties of
\emph{functions} to model properties of \emph{operators}. Thus
changing our viewpoint on functions, as was done in
Section~\ref{sec:analytic-functions}, we could get another approach to
operators.

The representation~\eqref{eq:rho-1} is unitary irreducible when acts
on the Hardy space \(\FSpace{H}{2}\). Consequently we have one more
reason to abolish the template definition~\ref{de:calculus-old}:
\(\FSpace{H}{2}\) is \emph{not} an algebra. Instead we replace the
\textit{homomorphism  property} by a \textit{symmetric covariance}:
\begin{defn}
  \label{de:functional-calculus-new}
  An \emph{analytic functional calculus} for an element
  \(a\in\algebra{A}\)\ and  an
  \(\algebra{A}\)-module \(M\)\ is a \textit{continuous 
  linear} mapping
  \(\Phi:\FSpace{A}{}(\Space{D}{})\rightarrow \FSpace{A}{}(\Space{D}{},M)\)\ such that 
  \begin{enumerate} 
  \item \(\Phi\)\ is an \emph{intertwining operator} 
    \begin{displaymath}
      \Phi\rho_1=\rho_a\Phi
    \end{displaymath}
    between two representations of the
    \(\SL\)\ group \(\rho_1\)~\eqref{eq:rho-1} and \(\rho_a\)\
    defined below in~\eqref{eq:rho-a}.
  \item There is an initialisation condition: \(\Phi[v_0]=m\)\ for
    \(v_0(z)\equiv 1\) and \(m\in M\), where \(M\) is a left
    \(\algebra{A}\)-module.  
  \end{enumerate}
\end{defn} Note that our functional calculus released form the
homomorphism condition can take value in any left
\(\algebra{A}\)-module \(M\), which however could be \(\algebra{A}\)
itself if suitable. This add much flexibility to our construction.

The earliest functional calculus, which is \emph{not} an algebraic
homomorphism, was the Weyl functional calculus and
was defined just by an integral formula as an operator valued
distribution~\cite{Anderson69}. In that paper
(joint) spectrum was defined as support of the Weyl calculus, i.e. as
the set of point where this operator valued distribution does not
vanish. We also define
the spectrum as a support of functional calculus, but due to our
Definition~\ref{de:functional-calculus-new} it will means the set of
non-vanishing intertwining operators with primary subrepresentations.
\begin{defn}
  \label{de:spectrum-new}
    A corresponding \emph{spectrum} of \(a\in\algebra{A}\) is the
  \textit{support} of the functional
  calculus \(\Phi\), i.e. the collection of intertwining operators of
  \(\rho_a\) with \emph{prime representations}~\cite[\S~8.3]{Kirillov76}.
\end{defn}

More variations of functional calculi are obtained from other groups and their
representations~\cite{Kisil95i,Kisil98a}. 

\subsection[Representations in Banach Algebras]{Representations of $\SL$ in Banach Algebras}
A simple but important observation is that the M\"obius
transformations~\eqref{eq:moebius} can be easily extended to any 
Banach algebra.
  Let \(\algebra{A}\) be a Banach algebra with the unit \(e\), 
  an element \(a\in\algebra{A}\) with \(\norm{a}<1\) be fixed, then 
  \begin{equation}
    \label{eq:sl2-on-A}
    g: a \mapsto g\cdot a=(\Ba a -\Bb e)(\alpha e-\beta a)^{-1}, \qquad
    g\in\SL
  \end{equation}
  is a well defined \(\SL\) action on a subset \(\Space{A}{}=\{g\cdot
  a \such g\in 
  \SL\}\subset\algebra{A}\), i.e. \(\Space{A}{}\) is a \(\SL\)-homogeneous
  space. Let us define the \emph{resolvent} function
  \(R(g,a):\Space{A}{}\rightarrow \algebra{A}\):
  \begin{displaymath}
    R(g, a)=(\alpha e-\beta a)^{-1} \quad 
  \end{displaymath}
  then 
  \begin{equation}
    \label{eq:ind-rep-multipl}
    R(g_1,\ga)R(g_2,g_1^{-1}\ga)=R(g_1g_2,\ga).
  \end{equation}
  The last identity is well known in representation
  theory~\cite[\S~13.2(10)]{Kirillov76} and is a key ingredient of
  \emph{induced representations}. Thus we can again
  linearise~\eqref{eq:sl2-on-A} (cf.~\eqref{eq:rho-1-1}) in
  the space of continuous functions \(\FSpace{C}{}(\Space{A}{},M)\)
  with values in  a left
  \(\algebra{A}\)-module \(M\), e.g.\(M=\algebra{A}\):
  \begin{eqnarray}
    \rho_a(g_1): f(g^{-1}\cdot a ) &\mapsto&
    R(g_1^{-1}g^{-1}, a)f(g_1^{-1}g^{-1}\cdot a) \label{eq:rho-a}\\
    &&\quad =
    (\alpha' e-\beta'a)^{-1} \,
    f\left(
      \frac{\Ba' \cdot a - \Bb' e}{\alpha'  e -\beta' a} 
    \right).  \nonumber
  \end{eqnarray}  
  For any \(m\in M\) we can again define a \(K\)-invariant
  \emph{vacuum vector} as \(v_m(g^{-1}\cdot 
  a)=m\otimes v_0(g^{-1}\cdot a) \in \FSpace{C}{}(\Space{A}{},M)\). 
  It generates the associated with \(v_m\) family of \emph{coherent
    states} \(v_m(u,a)=(ue-a)^{-1}m\), where \(u\in\Space{D}{}\).

The \emph{wavelet transform}  defined by
the same common formula based on coherent states (cf.~\eqref{eq:cauchy}):
\[\oper{W}_m f(g)= \scalar{f}{\rho_a(g) v_m},\qquad \]
is a version of Cauchy integral, which maps
\(\FSpace{L}{2}(\Space{A}{})\) to \(\FSpace{C}{}(\SL,M)\). It is
 closely related (but not identical!) to the
Riesz-Dunford functional calculus:  the traditional functional
calculus is given by the case:
\begin{displaymath}
  \Phi: f \mapsto \oper{W}_m f(0) \qquad\textrm{ for } M=\algebra{A}
  \textrm{ and } m=e.
\end{displaymath}

The both conditions---the intertwining property and initial
value---required by Definition~\ref{de:functional-calculus-new} easily
follows from our construction.

\subsection[Jet Bundles and Prolongations]{Jet Bundles and Prolongations of $\rho_1$}
\label{sec:jet-bundl-prol-1}
Spectrum was defined in~\ref{de:spectrum-new} as
the \emph{support} of our functional calculus. To elaborate its meaning we
need the notion of a \emph{prolongation} of representations introduced by
S.~Lie, see  \cite{Olver93,Olver95} for a detailed exposition.

\begin{defn} \textup{\cite[Chap.~4]{Olver95}}
  Two holomorphic functions have \(n\)th \emph{order contact} in a point
  if their value and their first \(n\) derivatives agree at that point,
  in other words their Taylor expansions are the same in first \(n+1\)
  terms. 

  A point \((z,u^{(n)})=(z,u,u_1,\ldots,u_n)\) of the \emph{jet space}
  \(\Space{J}{n}\sim\Space{D}{}\times\Space{C}{n}\) is the equivalence
  class of holomorphic functions having \(n\)th contact at the point \(z\)
  with the polynomial:
  \begin{equation}\label{eq:Taylor-polynom}
    p_n(w)=u_n\frac{(w-z)^n}{n!}+\cdots+u_1\frac{(w-z)}{1!}+u.
  \end{equation}
\end{defn}

For a fixed \(n\) each holomorphic function
\(f:\Space{D}{}\rightarrow\Space{C}{}\) has \(n\)th \emph{prolongation}
(or \emph{\(n\)-jet}) \(\object[_n]{j}f: \Space{D}{} \rightarrow
\Space{C}{n+1}\): 
\begin{equation}\label{eq:n-jet}
  \object[_n]{j}f(z)=(f(z),f'(z),\ldots,f^{(n)}(z)).
\end{equation}The graph \(\Gamma^{(n)}_f\) of \(\object[_n]{j}f\) is a
submanifold of \(\Space{J}{n}\) which is section of the \emph{jet
bundle} over \(\Space{D}{}\) with a fibre \(\Space{C}{n+1}\). We also
introduce a notation \(J_n\) for the map \(
  J_n:f\mapsto\Gamma^{(n)}_f
\) of a holomorphic \(f\) to the graph \(\Gamma^{(n)}_f\) of its \(n\)-jet
\(\object[_n]{j}f(z)\)~\eqref{eq:n-jet}.

One can prolong any map of functions \(\psi: f(z)\mapsto [\psi f](z)\) to
a map \(\psi^{(n)}\) of \(n\)-jets by the formula
\begin{equation}\label{eq:prolong-def}
  \psi^{(n)} (J_n f) = J_n(\psi f).
\end{equation} For example such a prolongation \(\rho_1^{(n)}\) of the
representation \(\rho_1\) of the group \(\SL\) in
\(\FSpace{H}{2}(\Space{D}{})\) (as any other representation of a Lie
group~\cite{Olver95}) will be again a representation of
\(\SL\). Equivalently we can say that \(J_n\) \emph{intertwines} \(\rho_1\) and
\(\rho^{(n)}_1\):
\begin{displaymath}
   J_n \rho_1(g)= \rho_1^{(n)}(g) J_n \quad
  \textrm{ for all } g\in\SL.
\end{displaymath}
Of course, the representation \(\rho^{(n)}_1\) is not irreducible: any jet
subspace \(\Space{J}{k}\), \(0\leq k \leq n\) is
\(\rho^{(n)}_1\)-invariant subspace of \(\Space{J}{n}\).  However the
representations \(\rho^{(n)}_1\) are
\emph{primary}~\cite[\S~8.3]{Kirillov76} in the sense that they are not 
sums of two subrepresentations.

The following statement explains why jet spaces appeared in our study
of functional calculus.
\begin{prop}
  \label{pr:Jordan-zero}
  Let matrix \(a\) be a Jordan block of a length \(k\) with the
  eigenvalue \(\lambda=0\),
  and \(m\) be its root vector of order \(k\), i.e. \(a^{k-1}m\neq
  a^k m =0\). Then the restriction of \(\rho_a\) on the subspace
  generated by \(v_m\) is equivalent to the representation
  \(\rho_1^{k}\).
\end{prop}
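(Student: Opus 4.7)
The strategy is to produce an explicit intertwining map between the cyclic subspace generated by $v_m$ under $\rho_a$ and the jet representation $\rho_1^{k}$, and then verify the two actions match. The key simplification is that the nilpotency $a^k=0$ of a zero-eigenvalue Jordan block of length $k$ makes the coherent state literally coincide with a truncated Taylor expansion of the scalar Cauchy kernel.

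First I would expand the coherent state as a geometric series. Because $a^k=0$,
\begin{equation*}
  v_m(u,a)=(ue-a)^{-1}m=\sum_{j=0}^{k-1}\frac{a^j m}{u^{j+1}}.
\end{equation*}
Since $m$ is a root vector of maximal order $k$, the family $\{a^j m\}_{j=0}^{k-1}$ is a basis of the cyclic $a$-invariant subspace, so the values of $v_m(\cdot,a)$ span a $k$-dimensional subspace of the module. Comparing with the Taylor expansion at $z=0$,
\begin{equation*}
  \frac{1}{u-z}=\sum_{j=0}^{\infty}\frac{z^j}{u^{j+1}},
\end{equation*}
each basis vector $a^j m$ matches (up to a factorial normalisation) the $j$-th Taylor coefficient of the Cauchy kernel, i.e.\ the $j$-th component of its $(k-1)$-jet at $z=0$. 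This yields a natural candidate intertwining map $\Psi$ carrying the $(k-1)$-jet of the Cauchy kernel at the origin onto $v_m(u,a)$, identifying $z^j/j!\leftrightarrow a^j m/j!$.

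Next I would verify that $\Psi$ intertwines $\rho_1^{k}$ with the restriction of $\rho_a$. Under $\rho_a(g)$ the coherent state picks up the resolvent multiplier $(\alpha e-\beta a)^{-1}$ together with the M\"obius substitution in $u$; the Neumann series $(\alpha e-\beta a)^{-1}=\sum_{j=0}^{k-1}\alpha^{-j-1}\beta^j a^j$ truncates by nilpotency and so acts on $\{a^j m\}$ through a finite triangular matrix. On the jet side, $\rho_1^{k}(g)$ acts on jets at $z=0$ by the scalar $\rho_1$-action composed with chain-rule bookkeeping; the expansion $(\alpha-\beta z)^{-1}=\sum_{j\geq 0}\alpha^{-j-1}\beta^j z^j$ delivers exactly the same triangular matrix in jet components under the identification $z^j\leftrightarrow a^j$. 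A term-by-term Taylor comparison at $z=0$, truncated after $k$ terms, should then close the intertwining identity, and linearity extends $\Psi$ to the whole cyclic subspace. The main obstacle is the chain-rule bookkeeping in this last step: one must carefully track how the multiplier $(\alpha-\beta z)^{-1}$ and the M\"obius substitution jointly redistribute jet components and check that this exactly reproduces the action of $(\alpha e-\beta a)^{-1}$ followed by the substitution $u\mapsto g^{-1}\cdot u$ on $\{a^j m\}$.
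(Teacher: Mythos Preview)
The paper states this proposition without proof---it is an outline/survey, and the result is announced rather than demonstrated---so there is no proof in the paper to compare against. Your approach is the natural one and would succeed.

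The governing observation you isolate is exactly right: nilpotency \(a^{k}=0\) truncates the Neumann series for \((ue-a)^{-1}m\) to \(\sum_{j=0}^{k-1}u^{-j-1}a^{j}m\), and these coefficients are precisely the first \(k\) Taylor coefficients of the scalar Cauchy kernel \((u-z)^{-1}\) at \(z=0\). The candidate intertwiner is therefore nothing more exotic than ``evaluate at \(z=a\) and apply to \(m\)'': it sends a jet represented by the polynomial \(p(z)=\sum_{j<k}c_{j}z^{j}\) to \(p(a)m=\sum_{j<k}c_{j}a^{j}m\). This is a linear isomorphism onto \(\mathrm{span}\{m,am,\ldots,a^{k-1}m\}\) because \(m\) is a root vector of maximal order, so the cyclic subspace has the correct dimension.

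Your residual worry about the ``chain-rule bookkeeping'' is easily discharged once you phrase the intertwiner this way. Substitution \(z\mapsto a\) commutes with any operation written as the same formal power series in \(z\) and in \(a\); both the multiplier \((\alpha-\beta z)^{-1}\) and the M\"obius argument \((\bar\alpha z-\bar\beta)(\alpha-\beta z)^{-1}\) have identical expansions to their operator counterparts \((\alpha e-\beta a)^{-1}\) and \((\bar\alpha a-\bar\beta e)(\alpha e-\beta a)^{-1}\), and truncation modulo \(z^{k}\) (resp.\ \(a^{k}=0\)) preserves the match. If you prefer an explicit check, verify the intertwining on the three one-parameter subgroups \(A\), \(N\), \(K\) of the Iwasawa decomposition separately; each case is a short computation.
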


\subsection{Spectrum and the Jordan Normal Form of a Matrix}
Now we are prepared to describe a spectrum of a matrix. Since the
functional calculus is an intertwining operator its support is
a decomposition into intertwining operators with prime
representations (we could not expect generally 
that these prime subrepresentations are irreducible).

Recall the transitive on \(\Space{D}{}\) group of inner
automorphisms of \(\SL\), which can send any
\(\lambda\in\Space{D}{}\) to \(0\) and are actually parametrised by
such a \(\lambda\).
This group extends Proposition~\ref{pr:Jordan-zero} to the complete
characterisation of \(\rho_a\) for matrices.
\begin{prop}  
  Representation \(\rho_a\) is equivalent to a direct sum of the
  prolongations \(\rho_1^{(k)}\) of \(\rho_1\) in the \(k\)th jet space
  \(\Space{J}{k}\) intertwined with inner automorphisms. Consequently
  the \textit{spectrum} of \(a\) (defined via the functional calculus
  \(\Phi=\oper{W}_m\)) labelled exactly by \(n\) pairs of numbers
  \((\lambda_i,k_i)\), \(\lambda_i\in\Space{D}{}\),
  \(k_i\in\Space[+]{Z}{}\), \(1\leq i \leq n\) some of whom could
  coincide.
\end{prop}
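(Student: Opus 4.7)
The plan is to reduce the general matrix $a$ to a direct sum of Jordan blocks and then treat each block using Proposition~\ref{pr:Jordan-zero} together with the action of inner automorphisms of $\SL$ on the disk $\Space{D}{}$.

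First I would invoke the Jordan normal form theorem for $a\in M_n(\Space{C}{})$ with spectral radius less than $1$ (the latter can always be arranged by rescaling, since $\rho_a$ depends on $a$ only up to this normalization needed for~\eqref{eq:sl2-on-A}): write $a=\bigoplus_{i=1}^n J_i$, where $J_i$ is a Jordan block with eigenvalue $\lambda_i\in\Space{D}{}$ and size $k_i$, and let $m_i$ be the corresponding root vector of order $k_i$. The ambient module $\Space{C}{n}$ then decomposes as the direct sum of the $\rho_a$-cyclic subspaces $M_i$ generated by the $v_{m_i}$, and it suffices to identify each restriction $\rho_a|_{M_i}$ with a prolongation $\rho_1^{(k_i)}$ composed with an inner automorphism.

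Next I would fix one Jordan block $J_i$ and use an inner automorphism of $\SL$ to shift $\lambda_i$ to the origin. Concretely, choose $g_{\lambda_i}\in\SL$ whose M\"obius action~\eqref{eq:sl2-on-A} sends $\lambda_i$ to $0$; then conjugation by $g_{\lambda_i}$ inside $\SL$ intertwines $\rho_a$ with $\rho_{a'}$ for the shifted element $a'=g_{\lambda_i}\cdot a$. On the cyclic subspace $M_i$ this replaces $J_i$ by a Jordan block with eigenvalue $0$, so Proposition~\ref{pr:Jordan-zero} applies and identifies $\rho_{a'}|_{M_i}$ with $\rho_1^{(k_i)}$ on the $k_i$-th jet space. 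Undoing the inner automorphism yields the required equivalence of $\rho_a|_{M_i}$ with the composition of $\rho_1^{(k_i)}$ and the inner automorphism tagged by $\lambda_i$. Summing over $i$ gives the first assertion.

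For the spectral statement, I would invoke Definition~\ref{de:spectrum-new}: the spectrum is the support of $\Phi=\oper{W}_m$, namely the set of non-trivial intertwining operators of $\rho_a$ with prime representations. Since the $\rho_1^{(k)}$ are primary~\cite[\S~8.3]{Kirillov76} and distinct pairs $(\lambda,k)$ give inequivalent components (different inner automorphism type or different jet order), the decomposition of $\rho_a$ just obtained yields exactly one intertwining operator for each pair $(\lambda_i,k_i)$ appearing. This gives the advertised list of $n$ pairs, with multiplicities corresponding to repeated Jordan blocks. The main obstacle I expect is the bookkeeping in the second step---carefully verifying that conjugation by $g_{\lambda_i}\in\SL$ intertwines $\rho_a$ and $\rho_{g_{\lambda_i}\cdot a}$ and preserves the cyclic subspace $M_i$---since after that the spectral conclusion is a formal consequence of Definition~\ref{de:spectrum-new} and the primality of the $\rho_1^{(k)}$.
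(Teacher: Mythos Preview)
Your proposal is correct and follows essentially the same route as the paper: the paper's argument is just the remark preceding the proposition (that inner automorphisms of $\SL$ act transitively on $\Space{D}{}$ and so reduce each Jordan block to the eigenvalue-zero case of Proposition~\ref{pr:Jordan-zero}) together with the observation afterward that this is a restatement of the Jordan normal form. You have simply made explicit the decomposition into cyclic subspaces and the appeal to Definition~\ref{de:spectrum-new} and primality of the $\rho_1^{(k)}$, which the paper leaves implicit; the bookkeeping point you flag is indeed glossed over in the paper as well.
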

Obviously this spectral theory is a fancy restatement of the \emph{Jordan
  normal form} of matrices.

\begin{figure}[tb]
  \begin{center}
 (a) \includegraphics[scale=.8]{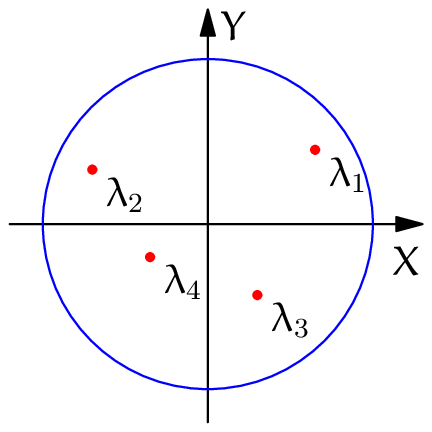}\hfill
  (b)\includegraphics[scale=.8]{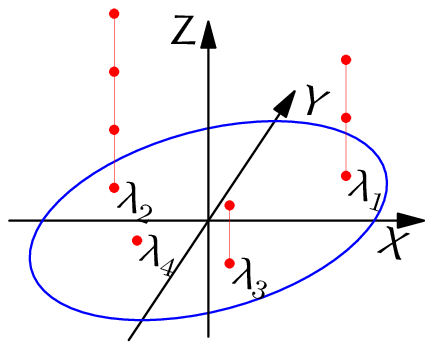}\hfill
  (c)\includegraphics[scale=.8]{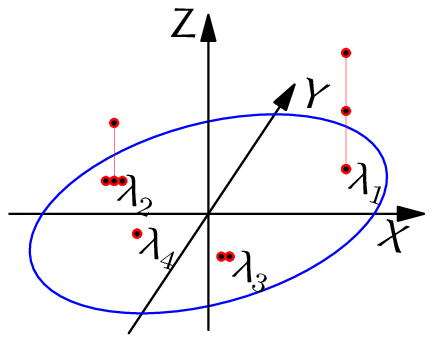}
 \caption[Three dimensional spectrum]{Classical spectrum of the matrix
   from the Ex.~\ref{ex:3dspectrum} is shown at (a). Covariant
   spectrum of the same matrix in the jet space is drawn at (b).  The
   image of the covariant spectrum under the map from
   Ex.~\ref{ex:spectral-mapping} is presented (c).} 
    \label{fig:3dspectrum}
  \end{center}
\end{figure}

\begin{example}
  \label{ex:3dspectrum}
  Let \(J_k(\lambda)\) denote the Jordan block of the length \(k\) for the
  eigenvalue \(\lambda\). On the Fig.~\ref{fig:3dspectrum} there are two
  pictures of the spectrum for the matrix
  \begin{displaymath}
    a=J_3\left(\lambda_1\right)\oplus     J_4\left(\lambda_2\right) 
    \oplus J_1\left(\lambda_3\right) \oplus      J_2\left(\lambda_4\right),
  \end{displaymath} 
  where
  \begin{displaymath}
    \lambda_1=\frac{3}{4}e^{i\pi/4}, \quad
    \lambda_2=\frac{2}{3}e^{i5\pi/6}, \quad
    \lambda_3=\frac{2}{5}e^{-i3\pi/4}, \quad
    \lambda_4=\frac{3}{5}e^{-i\pi/3}.
  \end{displaymath} Part (a) represents the conventional two-dimensional
  image of the spectrum, i.e. eigenvalues of \(a\), and
  \href{http://www.maths.leeds.ac.uk/~kisilv/calc1vr.gif}{(b) describes
  spectrum \(\spec{} a\) arising from the wavelet construction}. The
  first image did not allow to distinguish \(a\) from many other
  essentially different matrices, e.g. the diagonal matrix
  \begin{displaymath}
    \diag\left(\lambda_1,\lambda_2,\lambda_3,\lambda_4\right),
  \end{displaymath}
  which even have a different dimensionality.
  At the same time the Fig.~\ref{fig:3dspectrum}(b)
  completely characterise \(a\) up to a similarity. Note that each point of
  \(\spec a\) on Fig.~\ref{fig:3dspectrum}(b) corresponds to a particular
  root vector, which spans a primary subrepresentation.
\end{example}

\subsection{Spectral Mapping Theorem}
As was mentioned in the Introduction a resonable spectrum should be
linked to the corresponding functional calculus by an appropriate
spectral mapping theorem. The new version of spectrum is based on
prolongation of \(\rho_1\) into jet spaces (see
Section~\ref{sec:jet-bundl-prol-1}). Naturally a correct version of
spectral mapping theorem should also operate in jet spaces. 

Let \(\phi: \Space{D}{} \rightarrow \Space{D}{}\) be a holomorphic
map, let us define its action on functions \([\phi_*
f](z)=f(\phi(z))\). According to the general formula~\eqref{eq:prolong-def}
we can define the prolongation
\(\phi_*^{(n)}\) onto the jet space \(\Space{J}{n}\). Its associated
action \(\rho_1^k \phi_*^{(n)}=\phi_*^{(n)}\rho_1^n\) on the pairs
\((\lambda,k)\) is given by the formula:
\begin{equation}
  \label{eq:phi-star-action}
  \phi_*^{(n)}(\lambda,k)=\left(\phi(\lambda),
    \left[\frac{k}{\deg_\lambda \phi}\right]\right),
\end{equation}
where \(\deg_\lambda \phi\) denotes the degree of zero of the function
\(\phi(z)-\phi(\lambda)\) at the point \(z=\lambda\) and \([x]\) denotes
the integer part of \(x\). 

\begin{thm}[Spectral mapping] 
  Let \(\phi\) be a holomorphic mapping  \(\phi: \Space{D}{}
  \rightarrow \Space{D}{}\) and its prolonged action \(\phi_*^{(n)}\) defined
  by~\eqref{eq:phi-star-action}, then
  \begin{displaymath}
    \spec \phi(a) = \phi_*^{(n)} \spec a. 
  \end{displaymath}
\end{thm}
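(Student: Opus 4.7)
The plan is to reduce the identity $\spec \phi(a) = \phi_*^{(n)} \spec a$ to a per-block calculation on each primary summand of the representation $\rho_a$, and then match that calculation against the explicit formula~\eqref{eq:phi-star-action}.

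First, by the preceding proposition, $\rho_a$ is equivalent to a direct sum $\bigoplus_i \rho_1^{(k_i)}$ intertwined by inner automorphisms of $\SL$, one for each eigenvalue $\lambda_i \in \Space{D}{}$. Both the spectrum (as the support of the functional calculus in the sense of Def.~\ref{de:spectrum-new}) and the prolonged action $\phi_*^{(n)}$ decompose compatibly across this direct sum, so it is enough to prove the theorem on a single primary component. Using the $\SL$-inner automorphism that carries $\lambda_i \to 0$, I normalise further to the case $\lambda = 0$. Proposition~\ref{pr:Jordan-zero} then identifies the chosen summand of $\rho_a$ with the prolonged representation $\rho_1^{(k)}$ on the jet space $\Space{J}{k}$ at the origin, with distinguished cyclic vector $v_m$.

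Next, I compute the subrepresentation of $\rho_{\phi(a)}$ generated by the same $v_m$. Expanding the Taylor series $\phi(z) - \phi(0) = c_d z^d + O(z^{d+1})$ with $c_d \neq 0$ and $d = \deg_0 \phi$, one obtains $\phi(a) - \phi(0)I = c_d \, a^d \, U$ for an invertible operator $U$ commuting with $a$. The orbit of $v_m$ under powers of $\phi(a) - \phi(0)I$ therefore exhausts itself after precisely $[k/d]$ steps, so the primary subrepresentation it generates is equivalent to $\rho_1^{([k/d])}$. Translating back through the inner automorphism $0 \mapsto \phi(\lambda)$, the primary piece labelled by $(\lambda, k)$ in $\spec a$ contributes exactly the point $(\phi(\lambda), [k/\deg_\lambda \phi])$ to $\spec \phi(a)$, which is $\phi_*^{(n)}(\lambda, k)$ as claimed.

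The main obstacle is justifying the last step: one must argue that $v_m$ continues to generate a \emph{single} primary subrepresentation of $\rho_{\phi(a)}$ of exactly the predicted jet length, rather than splitting into several smaller blocks in the manner that the matrix Jordan form of $\phi(a)$ would naively suggest. Here the coherent-state/jet-bundle picture is essential: the prolongation formula~\eqref{eq:prolong-def} shows that $\phi$ transports the intertwining structure of $\rho_1^{(k)}$ covariantly along the orbit of the cyclic vector $v_m$, contracting the jet order uniformly by the factor $\deg_\lambda \phi$, so the module generated by $v_m$ in $\rho_{\phi(a)}$ remains primary. Verifying this claim via the wavelet transform and the commutation of $\Phi$ with $\phi_*$ is the computational heart of the argument.
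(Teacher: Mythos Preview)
The paper does not actually prove this theorem: after stating it, the text only remarks that the explicit formula~\eqref{eq:phi-star-action} is a known matrix-theoretic fact and cites \cite[Thm.~6.2.25]{HornJohnson94}. Your block-by-block reduction via the preceding Proposition and Proposition~\ref{pr:Jordan-zero}, followed by the factorisation \(\phi(a)-\phi(0)I = c_d\,a^{d}\,U\) with \(U\) an invertible polynomial in \(a\), is exactly the standard route to that Horn--Johnson result, so in spirit your argument and the paper's (outsourced) proof coincide.

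Two remarks on the execution. First, the ``main obstacle'' you flag in the last paragraph is not one: the cyclic \(\langle\phi(a)\rangle\)-module generated by any vector \(v_m\) lying in a single generalised eigenspace is automatically indecomposable (its annihilator is \((z-\phi(0))^{\ell}\) for some \(\ell\)), so no splitting can occur and no appeal to the jet-bundle covariance is needed there. Second, the genuine subtlety runs in the opposite direction. When \(d=\deg_\lambda\phi>1\), the operator \(\phi(a)\) restricted to the original \(k\)-dimensional block space has \emph{several} Jordan blocks (of sizes \(\lceil k/d\rceil\) and \(\lfloor k/d\rfloor\)), and only one of them contains \(v_m\); your own computation gives that block length \(\lceil k/d\rceil\), not \([k/d]\), the discrepancy with~\eqref{eq:phi-star-action} reflecting the paper's loose identification of jet order with block length. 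More importantly, the identity \(\spec\phi(a)=\phi_*^{(n)}\spec a\) can only hold as a one-to-one map of labels if \(\spec\) on both sides is read relative to the fixed vacuum vector \(m\) (Definition~\ref{de:spectrum-new} via \(\Phi=\oper{W}_m\)), so that the extra blocks of \(\phi(a)\) are invisible; it fails if \(\spec\phi(a)\) is taken to mean the full Jordan data of the matrix \(\phi(a)\). You should make that interpretation explicit rather than trying to argue the extra blocks away.
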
 

The explicit expression of~\eqref{eq:phi-star-action} for
\(\phi_*^{(n)}\), which involves derivatives of \(\phi\) upto \(n\)th order,
is known, see for example~\cite[Thm.~6.2.25]{HornJohnson94}, but was
not recognised before as form of spectral mapping.

\begin{example}
  \label{ex:spectral-mapping}
  Let us continue with Example~\ref{ex:3dspectrum}. Let \(\phi\) map
  all four eigenvalues \(\lambda_1\), \ldots, \(\lambda_4\) of the
  matrix \(a\) into themselves. Then Fig.~\ref{fig:3dspectrum}(a) will
  represent the classical spectrum of \(\phi(a)\) as well as \(a\).

  However Fig.~\ref{fig:3dspectrum}(c) shows mapping of the new
  spectrum for the case
  \(\phi\)  has
  \textit{orders of zeros} at these points as follows: the order \(1\)
  at \(\lambda_1\), exactly the order \(3\) at \(\lambda_2\), an order
  at least \(2\) at \(\lambda_3\), and finally any order at
  \(\lambda_4\).
\end{example}


\section{Open Problems}
\label{sec:open-problems}
In this section we indicate several directions for further work which
go through three main areas described in the paper.. 
\makeatletter
\def\p@enumi{\thesubsection.}
\makeatother

\subsection{Geometry}
\label{sec:geometry-problems}
Geometry is most elaborated area so far, yet many directions are
waiting for further exploration. 
\begin{enumerate}
\item M\"obius transformations~\eqref{eq:moebius} with three types
  of imaginary units appear from the action of the group \(\SL\) on
  the homogeneous space \(\SL/H\)~\cite{Kisil09c}, where \(H\) is any
  subgroup \(A\), \(N\), \(K\) from the Iwasawa
  decomposition~\eqref{eq:iwasawa-decomp}. Which other actions and
  hypercomplex numbers can be obtained from semisimple Lie groups and
  their subgroups?
\item Lobachevsky geometry of the upper half-plane is extremely beautiful and
  well-developed subject~\citelist{\cite{Beardon05a}
    \cite{CoxeterGreitzer}}. However the traditional study is limited
  to one subtype out of nine possible: with the complex numbers for
  M\"obius transformation and the complex imaginary unit used in
  FSCc~\eqref{eq:FSCc-matrix}. 
  The remaining eight cases shall be
  explored in various directions, notably in the context of discrete
  subgroups~\cite{Beardon95}. 
\item The Filmore-Springer-Cnops construction, see
  subsection~\ref{sec:invariance-fscc}, is closely related to the
  orbit method~\cite{Kirillov99} applied to \(\SL\). An extension of
  the orbit method from the Lie algebra dual to matrices representing
  cycles may be fruitful for semisimple Lie groups.
\end{enumerate}

\subsection{Analytic Functions}
\label{sec:analytic-functions-problems}
It is known that in several dimensions there are different notions of
analyticity, e.g. several complex variables and Clifford analysis.
However, analytic functions of a complex variable are
usually thought to be the only options in a plane domain. The
following seems to be promising:
\begin{enumerate}
\item \label{it:hyp-functions}
  Development of the basic components of analytic function theory
  (the Cauchy integral, the Taylor 
  expansion, the Cauchy-Riemann and Laplace equations, etc.) from the same
  construction and principles in the elliptic, parabolic and hyperbolic
  cases and subcases.
\item \label{it:bergman}
  Identification of Hilbert spaces of analytic functions of Hardy and
  Bergman types, investigation of their properties. Consideration of the
  corresponding T\"oplitz operators and algebras generated by them.
\item Application of analytic methods to elliptic, parabolic and hyperbolic
  equations and corresponding boundary and initial values problems. 
\item \label{it:mult-dim-funct}
  Generalisation of the results obtained to higher dimensional
  spaces. Detailed investigation of physically significant cases of three
  and four dimensions.
\end{enumerate}

\subsection{Functional Calculus}
\label{sec:functional-calculus-problems}
The functional calculus of a finite dimensional operator considered in
Section~\ref{sec:functional-calculus} is elementary but provides a
coherent and comprehensive treatment. It shall be extended to further
cases where other approaches seems to be rather limited.
\begin{enumerate}
\item Nilpotent and quasinilpotent operators have the most trivial
  spectrum possible (the single point \(\{0\}\)) while their structure
  can be highly non-trivial. Thus the standard spectrum is
  insufficient for this class of operators. In contract, the covariant
  calculus and the spectrum give complete description of nilpotent
  operators---the basic prototypes of quasinilpotent ones.  For
  quasinilpotent operators the construction will be more complicated
  and shall use analytic functions mentioned in \ref{it:hyp-functions}.
  
\item The version of covariant calculus described above is based on the
  \emph{discrete series} representations of \(\SL\) group and is particularly
  suitable for the description of the \emph{discrete spectrum} (note the
  remarkable coincidence in the names). 
  
  It is interesting to develop similar covariant calculi based on the
  two other representation series of \(\SL\): \emph{principal} and
  \emph{complementary}~\cite{Lang85}. The corresponding versions of
  analytic function theories for principal~\cite{Kisil97c} and
  complementary series~\cite{Kisil05a} were initiated within a
  unifying framework. The classification of analytic function theories
  into elliptic, parabolic, hyperbolic~\cite{Kisil05a,Kisil06a} hints
  the following associative chains:
  \begin{center}
    \begin{tabular}{c@{---}c@{---}c}
      \textbf{Representations of \(\SL\) } &  \textbf{ Function Theory } & 
      \textbf{ Type of Spectrum }\\[1mm] \hline\hline
      discrete series &  elliptic   & discrete spectrum\\
      principal series& hyperbolic & continuous spectrum\\
      complementary series & parabolic & residual spectrum
    \end{tabular}
  \end{center}

\item Let \(a\) be an operator with \(\spec a\in\bar{\Space{D}{}}\)
  and \(\norm{a^k}< C k^p\). It is typical to consider instead of
  \(a\) the \emph{power bounded} operator \(ra\), where \(0<r< 1\),
  and consequently develop its \(\FSpace{H}{\infty}\) calculus.
  However such a regularisation is very rough and hides the nature of
  extreme points of \(\spec{a}\). To restore full information a
  subsequent limit transition \(r\rightarrow 1\) of the regularisation
  parameter \(r\) is required. This make the entire technique rather
  cumbersome and many results have an indirect nature.

  The regularisation \(a^k\rightarrow a^k/k^p\) is more natural and
  accurate for polynomially bounded operators. However it cannot be
  achieved within the homomorphic calculus Defn.~\ref{de:calculus-old}
  because it is not compatible with any algebra homomorphism. Albeit
  this may be achieved within the covariant
  calculus~Defn.~\ref{de:functional-calculus-new} and Bergman type space
  from~\ref{it:bergman}.

\item Several non-commuting operators are especially difficult to
  treat with functional calculus Defn.~\ref{de:calculus-old} or a joint
  spectrum. For example, deep insights on joint spectrum of commuting
  tuples~\cite{JTaylor72} refused to be generalised to non-commuting
  case so far.  The covariant calculus was initiated~\cite{Kisil95i}
  as a new approach to this hard problem and was later found useful
  elsewhere as well.  Multidimensional covariant
  calculus~\cite{Kisil04d} shall use analytic functions described
  in~\ref{it:mult-dim-funct}.

\end{enumerate}

\subsection{Quantum Mechanics}
\label{sec:quantum-mechanics}

Due to the space restrictions we did not mentioned connections with
quantum mechanics~\citelist{\cite{Kisil96a} \cite{Kisil02e}
  \cite{Kisil05c} \cite{Kisil04a} \cite{Kisil09a} \cite{Kisil10a}}. In general
Erlangen approach is much more popular among physicists rather than
mathematicians. Nevertheless its potential is not exhausted even there. 

\begin{enumerate}
\item There is a possibility to build representation of the Heisenberg
  group using characters of its centre with values in dual and double
  numbers rather than in complex ones. This will naturally unifies
  classical mechanics, traditional QM and hyperbolic
  QM~\cite{Khrennikov08a}. 
\item  Representations of nilpotent Lie groups with multidimensional
  centres in Clifford algebras as a framework for consistent quantum
  filed theories based on De Donder--Weyl formalism~\cite{Kisil04a}.
\end{enumerate}
\begin{rem}
  This work is performed within the ``Erlangen programme at large''
  framework~\cites{Kisil06a,Kisil05a}, thus it would be suitable to
  explain the numbering of various papers. Since the logical order may be
  different from chronological one the following numbering  scheme
  is used:
  \begin{center}
  \begin{tabular}{||c|p{.7\textwidth}||}
    \hline\hline
    Prefix&Branch description\\
    \hline\hline
    ``0'' or no prefix & Mainly geometrical works, within the classical
    field of Erlangen programme by F.~Klein, see~\citelist{
    \cite{Kisil05a} \cite{Kisil09c}}\\
    \hline 
    ``1'' & Papers on analytical functions theories and wavelets, e.g.~\cite{Kisil97c}\\
    \hline
    ``2'' & Papers on operator theory, functional calculi and
    spectra, e.g.~\cite{Kisil02a}\\ 
    \hline 
    ``3'' & Papers on mathematical physics, e.g.~\cite{Kisil10a}\\
    \hline\hline
  \end{tabular}    
  \end{center}
  For example, this is the first paper in the mathematical physics area.
\end{rem}

\small

\providecommand{\CPP}{\texttt{C++}} \providecommand{\NoWEB}{\texttt{noweb}}
  \providecommand{\MetaPost}{\texttt{Meta}\-\texttt{Post}}
  \providecommand{\GiNaC}{\textsf{GiNaC}}
  \providecommand{\pyGiNaC}{\textsf{pyGiNaC}}
  \providecommand{\Asymptote}{\texttt{Asymptote}} \newcommand{\noopsort}[1]{}
  \newcommand{\printfirst}[2]{#1} \newcommand{\singleletter}[1]{#1}
  \newcommand{\switchargs}[2]{#2#1} \newcommand{\irm}{\textup{I}}
  \newcommand{\iirm}{\textup{II}} \newcommand{\vrm}{\textup{V}}
  \providecommand{\cprime}{'} \providecommand{\eprint}[2]{\texttt{#2}}
  \providecommand{\myeprint}[2]{\texttt{#2}}
  \providecommand{\arXiv}[1]{\myeprint{http://arXiv.org/abs/#1}{arXiv:#1}}
  \providecommand{\doi}[1]{\href{http://dx.doi.org/#1}{doi: #1}}
\begin{bibdiv}
\begin{biblist}

\bib{Anderson69}{article}{
      author={Anderson, Robert F.~V.},
       title={The {W}eyl functional calculus},
        date={1969},
     journal={J. Functional Analysis},
      volume={4},
       pages={240\ndash 267},
      review={\MR{58 \#30405}},
}

\bib{AtiyahSchmid80}{incollection}{
      author={Atiyah, Michael},
      author={Schmid, Wilfried},
       title={A geometric construction of the discrete series for semisimple
  {Lie} group},
        date={{\noopsort{}}1980},
   booktitle={Harmonic analysis and representations of semisimple {Lie} group},
      editor={Wolf, J.A.},
      editor={Cahen, M.},
      editor={Wilde, M.~De},
      series={Mathematical Physics and Applied Mathematics},
      volume={5},
   publisher={D. Reidel Publishing Company},
     address={Dordrecht, Holland},
       pages={317\ndash 383},
}

\bib{Beardon95}{book}{
      author={Beardon, Alan~F.},
       title={The geometry of discrete groups},
      series={Graduate Texts in Mathematics},
   publisher={Springer-Verlag},
     address={New York},
        date={1995},
      volume={91},
        ISBN={0-387-90788-2},
        note={Corrected reprint of the 1983 original},
      review={\MR{MR1393195 (97d:22011)}},
}

\bib{Beardon05a}{book}{
      author={Beardon, Alan~F.},
       title={Algebra and geometry},
   publisher={Cambridge University Press},
     address={Cambridge},
        date={2005},
        ISBN={0-521-89049-7},
      review={\MR{MR2153234 (2006a:00001)}},
}

\bib{Cnops02a}{book}{
      author={Cnops, Jan},
       title={An introduction to {D}irac operators on manifolds},
      series={Progress in Mathematical Physics},
   publisher={Birkh\"auser Boston Inc.},
     address={Boston, MA},
        date={2002},
      volume={24},
        ISBN={0-8176-4298-6},
      review={\MR{1 917 405}},
}

\bib{CoxeterGreitzer}{book}{
      author={Coxeter, H.S.M.},
      author={Greitzer, S.L.},
       title={{Geometry revisited.}},
    language={English},
   publisher={{New York: Random House: The L. W. Singer Company. XIV, 193 p.
  }},
        date={1967},
        note={\Zbl{0166.16402}},
}

\bib{HornJohnson94}{book}{
      author={Horn, Roger~A.},
      author={Johnson, Charles~R.},
       title={Topics in matrix analysis},
   publisher={Cambridge University Press},
     address={Cambridge},
        date={1994},
        ISBN={0-521-46713-6},
        note={Corrected reprint of the 1991 original.},
      review={\MR{95c:15001}},
}

\bib{Howe80a}{article}{
      author={Howe, Roger},
       title={On the role of the {H}eisenberg group in harmonic analysis},
        date={1980},
        ISSN={0002-9904},
     journal={Bull. Amer. Math. Soc. (N.S.)},
      volume={3},
      number={2},
       pages={821\ndash 843},
      review={\MR{81h:22010}},
}

\bib{Khrennikov08a}{article}{
      author={Khrennikov, Andrei},
       title={Hyperbolic quantization},
        date={2008},
        ISSN={0188-7009},
     journal={Adv. Appl. Clifford Algebr.},
      volume={18},
      number={3-4},
       pages={843\ndash 852},
      review={\MR{MR2490591}},
}

\bib{Kirillov76}{book}{
      author={Kirillov, A.~A.},
       title={Elements of the theory of representations},
   publisher={Springer-Verlag},
     address={Berlin},
        date={1976},
        note={Translated from the Russian by Edwin Hewitt, Grundlehren der
  Mathematischen Wissenschaften, Band 220},
      review={\MR{54 \#447}},
}

\bib{Kirillov99}{article}{
      author={Kirillov, A.~A.},
       title={Merits and demerits of the orbit method},
        date={1999},
        ISSN={0273-0979},
     journal={Bull. Amer. Math. Soc. (N.S.)},
      volume={36},
      number={4},
       pages={433\ndash 488},
      review={\MR{2000h:22001}},
}

\bib{Kirillov06}{book}{
      author={Kirillov, A.~A.},
       title={A tale on two fractals},
        date={2010},
        note={(To appear)},
}

\bib{Kisil08a}{article}{
      author={Kisil, Anastasia~V.},
       title={Isometric action of {${\rm SL}_2(\mathbb{R})$} on homogeneous
  spaces},
        date={2010},
     journal={Advances in Applied Clifford Algebras},
        note={\arXiv{0810.0368}, \doi{10.1007/s00006-010-0203-2}},
}

\bib{Kisil95i}{article}{
      author={Kisil, Vladimir~V.},
       title={M\"obius transformations and monogenic functional calculus},
        date={1996},
        ISSN={1079-6762},
     journal={Electron. Res. Announc. Amer. Math. Soc.},
      volume={2},
      number={1},
       pages={26\ndash 33},
  note={\href{http://www.ams.org/era/1996-02-01/S1079-6762-96-00004-2/}{On-lin%
e}},
      review={\MR{MR1405966 (98a:47018)}},
}

\bib{Kisil96a}{article}{
      author={Kisil, Vladimir~V.},
       title={Plain mechanics: classical and quantum},
        date={1996},
        ISSN={0963-2654},
     journal={J. Natur. Geom.},
      volume={9},
      number={1},
       pages={1\ndash 14},
        note={\arXiv{funct-an/9405002}},
      review={\MR{MR1374912 (96m:81112)}},
}

\bib{Kisil97c}{article}{
      author={Kisil, Vladimir~V.},
       title={Analysis in {$\bold R\sp {1,1}$} or the principal function
  theory},
        date={1999},
        ISSN={0278-1077},
     journal={Complex Variables Theory Appl.},
      volume={40},
      number={2},
       pages={93\ndash 118},
        note={\arXiv{funct-an/9712003}},
      review={\MR{MR1744876 (2000k:30078)}},
}

\bib{Kisil97a}{incollection}{
      author={Kisil, Vladimir~V.},
       title={Two approaches to non-commutative geometry},
        date={1999},
   booktitle={Complex methods for partial differential equations ({A}nkara,
  1998)},
      series={Int. Soc. Anal. Appl. Comput.},
      volume={6},
   publisher={Kluwer Acad. Publ.},
     address={Dordrecht},
       pages={215\ndash 244},
        note={\arXiv{funct-an/9703001}},
      review={\MR{MR1744440 (2001a:01002)}},
}

\bib{Kisil98a}{article}{
      author={Kisil, Vladimir~V.},
       title={Wavelets in {B}anach spaces},
        date={1999},
        ISSN={0167-8019},
     journal={Acta Appl. Math.},
      volume={59},
      number={1},
       pages={79\ndash 109},
        note={\arXiv{math/9807141},
  \href{http://dx.doi.org/10.1023/A:1006394832290}{On-line}},
      review={\MR{MR1740458 (2001c:43013)}},
}

\bib{Kisil01a}{misc}{
      author={Kisil, Vladimir~V.},
       title={Spaces of analytical functions and wavelets---{Lecture} notes},
        date={2000--2002},
        note={92 pp. \arXiv{math.CV/0204018}},
}

\bib{Kisil02c}{incollection}{
      author={Kisil, Vladimir~V.},
       title={Meeting {D}escartes and {K}lein somewhere in a noncommutative
  space},
        date={2002},
   booktitle={Highlights of mathematical physics (london, 2000)},
   publisher={Amer. Math. Soc.},
     address={Providence, RI},
       pages={165\ndash 189},
        note={\arXiv{math-ph/0112059}},
      review={\MR{MR2001578 (2005b:43015)}},
}

\bib{Kisil04d}{article}{
      author={Kisil, Vladimir~V.},
       title={Monogenic calculus as an intertwining operator},
        date={2004},
        ISSN={1370-1444},
     journal={Bull. Belg. Math. Soc. Simon Stevin},
      volume={11},
      number={5},
       pages={739\ndash 757},
        note={\arXiv{math.FA/0311285},
  \href{http://projecteuclid.org/getRecord?id=euclid.bbms/1110205630}{On-line}%
},
      review={\MR{MR2130636 (2006a:47025)}},
}

\bib{Kisil02e}{article}{
      author={Kisil, Vladimir~V.},
       title={{$p$}-{M}echanics as a physical theory: an introduction},
        date={2004},
        ISSN={0305-4470},
     journal={J. Phys. A},
      volume={37},
      number={1},
       pages={183\ndash 204},
        note={\arXiv{quant-ph/0212101},
  \href{http://stacks.iop.org/0305-4470/37/183}{On-line}. \Zbl{1045.81032}},
      review={\MR{MR2044764 (2005c:81078)}},
}

\bib{Kisil02a}{inproceedings}{
      author={Kisil, Vladimir~V.},
       title={Spectrum as the support of functional calculus},
        date={2004},
   booktitle={Functional analysis and its applications},
      series={North-Holland Math. Stud.},
      volume={197},
   publisher={Elsevier},
     address={Amsterdam},
       pages={133\ndash 141},
        note={\arXiv{math.FA/0208249}},
      review={\MR{MR2098877}},
}

\bib{Kisil05a}{article}{
      author={Kisil, Vladimir~V.},
       title={Erlangen program at large--1: Geometry of invariants},
        date={2005},
        note={\arXiv{math.CV/0512416}, Preprint LEEDS--MATH--PURE--2005--28},
}

\bib{Kisil04a}{article}{
      author={Kisil, Vladimir~V.},
       title={{$p$}-mechanics and field theory},
        date={2005},
        ISSN={0034-4877},
     journal={Rep. Math. Phys.},
      volume={56},
      number={2},
       pages={161\ndash 174},
        note={\arXiv{quant-ph/0402035},
  \href{=http://dx.doi.org/10.1016/S0034-4877(05)80068-0}{On-line}},
      review={\MR{MR2176789 (2006h:53104)}},
}

\bib{Kisil05c}{article}{
      author={Kisil, Vladimir~V.},
       title={A quantum-classical bracket from {$p$}-mechanics},
        date={2005},
        ISSN={0295-5075},
     journal={Europhys. Lett.},
      volume={72},
      number={6},
       pages={873\ndash 879},
        note={\arXiv{quant-ph/0506122},
  \href{http://dx.doi.org/10.1209/epl/i2005-10324-7}{On-line}},
      review={\MR{MR2213328 (2006k:81134)}},
}

\bib{Kisil06a}{article}{
      author={Kisil, Vladimir~V.},
       title={Erlangen program at large--0: Starting with the group {${\rm
  SL}\sb 2({\bf R})$}},
        date={2007},
        ISSN={0002-9920},
     journal={Notices Amer. Math. Soc.},
      volume={54},
      number={11},
       pages={1458\ndash 1465},
        note={\arXiv{math/0607387},
  \href{http://www.ams.org/notices/200711/tx071101458p.pdf}{On-line}},
      review={\MR{MR2361159}},
}

\bib{Kisil09c}{article}{
      author={Kisil, Vladimir~V.},
       title={{E}rlangen program at large---2 1/2: {I}nduced representations
  and hypercomplex numbers},
        date={2009},
     journal={submitted},
        note={\arXiv{0909.4464}},
}

\bib{Kisil09a}{article}{
      author={Kisil, Vladimir~V.},
       title={Comment on ``{D}o we have a consistent non-adiabatic
  quantum-classical mechanics?'' by {A}gostini {F}. et al},
        date={2010},
     journal={Europhys. Lett. EPL},
      volume={89},
       pages={50005},
        note={\arXiv{0907.0855}, \doi{10.1209/0295-5075/89/50005}},
}

\bib{Kisil10a}{article}{
      author={Kisil, Vladimir~V.},
       title={{E}rlangen {P}rogramme at {L}arge 3.1: {H}ypercomplex
  representations of the {H}eisenberg group and mechanics},
        date={2010},
     journal={submitted},
        note={\arXiv{1005.5057}},
}

\bib{KnappWallach76}{article}{
      author={Knapp, A.W.},
      author={Wallach, N.R.},
       title={Szeg\"o kernels associated with discrete series},
        date={1976},
     journal={Invent. Math.},
      volume={34},
      number={3},
       pages={163\ndash 200},
}

\bib{Lang85}{book}{
      author={Lang, Serge},
       title={{${\rm SL}\sb 2({\bf R})$}},
      series={Graduate Texts in Mathematics},
   publisher={Springer-Verlag},
     address={New York},
        date={1985},
      volume={105},
        ISBN={0-387-96198-4},
        note={Reprint of the 1975 edition},
      review={\MR{803508 (86j:22018)}},
}

\bib{Nikolskii86}{book}{
      author={Nikol{\cprime}ski{\u\i}, N.~K.},
       title={Treatise on the shift operator},
   publisher={Springer-Verlag},
     address={Berlin},
        date={1986},
        ISBN={3-540-15021-8},
        note={Spectral function theory, With an appendix by S. V. Hru\v s\v cev
  [S. V. Khrushch\"ev] and V. V. Peller, Translated from the Russian by Jaak
  Peetre.},
      review={\MR{87i:47042}},
}

\bib{Olver93}{book}{
      author={Olver, Peter~J.},
       title={Applications of {L}ie groups to differential equations},
     edition={Second},
   publisher={Springer-Verlag},
     address={New York},
        date={1993},
        ISBN={0-387-94007-3; 0-387-95000-1},
      review={\MR{94g:58260}},
}

\bib{Olver95}{book}{
      author={Olver, Peter~J.},
       title={Equivalence, invariants, and symmetry},
   publisher={Cambridge University Press},
     address={Cambridge},
        date={1995},
        ISBN={0-521-47811-1},
      review={\MR{96i:58005}},
}

\bib{JTaylor72}{article}{
      author={Taylor, Joseph~L.},
       title={A general framework for a multi-operator functional calculus},
        date={1972},
     journal={Advances in Math.},
      volume={9},
       pages={183\ndash 252},
      review={\MR{0328625 (48 \#6967)}},
}

\bib{Yaglom79}{book}{
      author={Yaglom, I.~M.},
       title={A simple non-{E}uclidean geometry and its physical basis},
   publisher={Springer-Verlag},
     address={New York},
        date={1979},
        ISBN={0-387-90332-1},
        note={An elementary account of Galilean geometry and the Galilean
  principle of relativity, Heidelberg Science Library, Translated from the
  Russian by Abe Shenitzer, With the editorial assistance of Basil Gordon},
      review={\MR{MR520230 (80c:51007)}},
}

\end{biblist}
\end{bibdiv}

\end{document}